\newtheorem{theorem}{Theorem}
\theoremstyle{plain}
\newtheorem{corollary}{Corollary}
\newtheorem{definition}{Definition}
\newtheorem{example}{Example}
\newtheorem{proposition}{Proposition}
\newtheorem{remark}{Remark}
\numberwithin{equation}{section}
\begin{document}
\title[Metallic structures on differentiable manifolds]{Metallic structures
on differentiable manifolds}
\author{Mustafa \"{O}zkan}
\address{Gazi University, Faculty of Science, Department of Mathematics\\
06500, Teknikokullar, Ankara, Turkey}
\email{ozkanm@gazi.edu.tr, fatmayilmazsevgi06@gmail.com}
\author{Fatma Y\i lmaz}
\date{}
\subjclass[2000]{ 53C15, 57N16, 53C25}
\keywords{Metallic ratio, metallic structure, polynomial structure, almost
product structure, Riemannian manifold. }

\begin{abstract}
In this paper, we study metallic structures, i.e. polynomial structures with
the structure polynomial $Q\left( J\right) =J^{2}-aJ-bI$ on manifolds using
the metallic ratio, which is a generalization of the Golden proportion. We
investigate for integrability and parallelism conditions of metallic
structures. Also, we give some properties of the metallic Riemannian metric
and an example of the metallic structure.
\end{abstract}

\maketitle

\section{Introduction}

The Golden ratio is well known by everyone and this ratio possesses rich
application areas such as architecture, painting, design, music, nature,
optimization, and perceptual studies. However, in fact, it is probably fair
to say that the golden ratio has inspired thinkers of all disciplines like
no other number in the history of mathematics.

Crasmareanu and Hretcanu \cite{golden} researched the geometry of the golden
structure on a manifold using a corresponding almost product structure. \"{O}%
zkan \cite{ozkan2014}, \"{O}zkan, \c{C}\i tlak and Taylan \cite{ozkan2015}
and \"{O}zkan and Y\i lmaz \cite{ozkan2016} studied the prolongations of
golden structure. Also, many studies made on golden structures and golden
Riemannian manifolds \cite{gezer, gezer3,savas, sahin}.

In this paper, we use the metallic ratio, which is a generalization of the
golden proportion. This generalization was introduced in 1999 by Vera W. de
Spinadel \cite{spinadel} and named as the metallic means family or metallic
proportions. Recently, many authors studied on metallic structures \cite%
{gezer2,hretcanu2013}.

We are inspired by \cite{golden} and follow its steps. By this way, we use
metallic structure on a differentiable manifold. We are interested in
finding the properties of the metallic structure.

The outline of this paper is as follows. In section 2, we give basic
definitions about metallic means and metallic means family. In the next
section, some relations between the metallic structure and the almost
product structure, almost tangent structure, almost complex structure are
given. Moreover, we show some properties of the metallic structure. In
section 4, we give some examples of the metallic structure. After that, we
study a few connections on the metallic structure. In section 6, we search
for the integrability and parallellism of the metallic structure with
respect to the Schouten and Vr\u{a}nceanu connections. Section 7 deals with
the metallic Riemannian manifold and its properties. At the end of this
section, we give an example of the metallic structure on manifold $%
\mathbb{R}
^{2}$.

\section{Preliminaries}

In this section, we give a brief information about the metallic means family
using the Generalized Secondary Fibonacci sequence.

Fibonacci sequence provides the following relation:%
\begin{equation*}
F(k+1)=F(k)+F(k-1).
\end{equation*}%
This relation can be generalized using 
\begin{equation*}
G(k+1)=aG(k)+bG(k-1),\text{ }k\geq 1
\end{equation*}%
where $a,$ $b,$ $G(0)=c,$ and $G(1)=d$ are real numbers.%
\begin{equation*}
c,\text{ }d,\text{ }ad+bc,\text{ }a(ad+bc)+bd,...
\end{equation*}%
are the members of the Generalized Secondary Fibonacci Sequence.

Let $a$ and $b$ be positive integers. The positive root of the equation%
\begin{equation*}
x^{2}-ax-b=0
\end{equation*}%
is called a member of the metallic means family \cite{spinadel}. This root
is denoted by%
\begin{equation}
\rho _{a,b}=\frac{a+\sqrt{a^{2}+4b}}{2}  \label{2.1}
\end{equation}%
and it is known as metallic ratio.

In (\ref{2.1}), for different values of $a$ and $b$, we obtain the following
ratios:

\begin{itemize}
\item If $a=b=1$, $\rho _{1,1}=\frac{1+\sqrt{5}}{2},$ we get the Golden
mean, which is the ratio of two consecutive classical Fibonacci numbers \cite%
{hretcanu2013},

\item If $a=2$ and $b=1$, $\rho _{2,1}=1+\sqrt{2}$, we get the Silver mean,
which is the ratio of two consecutive Pell numbers \cite{falcon,
hretcanu2013, ozkanpeltek, ozkanvd},

\item If $a=3$ and $b=1$, $\rho _{3,1}=\dfrac{3+\sqrt{13}}{2}$, we get the
Bronze mean \cite{hretcanu2013, yilmaz},

\item If $a=4$ and $b=1$, $\rho _{4,1}=2+\sqrt{5}$, we get the Subtle mean 
\cite{naschie, hretcanu2013},

\item If $a=1$ and $b=2$, $\rho _{1,2}=2$, we get the Copper mean \cite%
{hretcanu2013},

\item If $a=1$ and $b=3$, $\rho _{1,3}=\dfrac{1+\sqrt{13}}{2}$, we get the
Nickel mean \cite{hretcanu2013}.
\end{itemize}

Throughout this paper, all manifolds are assumed to be of class $C^{\infty }$
unless otherwise stated.

\section{Metallic Structures on Manifolds}

Before making the description of the metallic structures on manifolds, we
give some definitions in order to state the main result of this paper.

\begin{definition}[\protect\cite{hretcanu2013}]
Let $\ M_{n}$ be an $n-$dimensional manifold. A polynomial structure on a
manifold $M_{n}$ is called a metallic structure if it is determined by an $%
\left( 1,1\right) -$tensor field $J$, which satisfies the equation%
\begin{equation}
J^{2}=aJ+bI  \label{3.1}
\end{equation}%
where $a,b$ are positive integers and $I$ is the identity operator on the
Lie algebra $\chi \left( M_{n}\right) $ of the vector fields on $M_{n}$.
\end{definition}

The following proposition gives the main properties of a metallic structure.

\begin{proposition}[\protect\cite{hretcanu2013}]
(i) The eigenvalues of $J$ are the metallic ratio $\rho _{a,b}$ and $a-\rho
_{a,b}$.

(ii) A metallic structure $J$ is an isomorphism on the tangent space of the
manifold $T_{x}\left( M_{n}\right) $ for every $x\in M_{n}$.

(iii) $J$ is invertible and its inverse $J^{-1}=\hat{J}$ satisfies%
\begin{equation*}
b\hat{J}^{2}+a\hat{J}-I=0.
\end{equation*}
\end{proposition}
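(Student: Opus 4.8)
The plan is to prove the three parts of the proposition directly from the defining equation $J^{2}=aJ+bI$, treating $J$ as an operator on each tangent space $T_{x}(M_{n})$.

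For part (i), I would start by letting $\lambda$ be an eigenvalue of $J$ with eigenvector $v\neq 0$, so that $Jv=\lambda v$. Applying $J$ once more gives $J^{2}v=\lambda^{2}v$, while the structure equation (\ref{3.1}) forces $J^{2}v=(aJ+bI)v=(a\lambda+b)v$. Comparing the two expressions yields the scalar equation $\lambda^{2}=a\lambda+b$, i.e. $\lambda^{2}-a\lambda-b=0$. The two roots of this quadratic are $\frac{a\pm\sqrt{a^{2}+4b}}{2}$; by the definition in (\ref{2.1}) the positive root is exactly $\rho_{a,b}$, and a short computation shows the other root equals $a-\rho_{a,b}$ (their sum is $a$ by Vieta's formulas). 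This identifies the only possible eigenvalues of $J$ as $\rho_{a,b}$ and $a-\rho_{a,b}$.

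For part (ii), the key observation is that neither eigenvalue is zero. Since $a,b$ are positive integers we have $a^{2}+4b>a^{2}$, hence $\rho_{a,b}>a>0$, and the product of the two roots equals $-b\neq 0$ by Vieta, so $a-\rho_{a,b}=-b/\rho_{a,b}\neq 0$ as well. Because $0$ is not an eigenvalue, $J$ has trivial kernel on each finite-dimensional space $T_{x}(M_{n})$ and is therefore a linear isomorphism of the tangent space for every $x$. This immediately gives invertibility in part (iii).

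For part (iii), I would produce the inverse and its relation algebraically rather than abstractly. Rewriting (\ref{3.1}) as $J^{2}-aJ=bI$ and factoring, $J(J-aI)=bI$, which (using $b\neq 0$) shows $\hat{J}=J^{-1}=\frac{1}{b}(J-aI)$. To obtain the stated identity, I would multiply the original equation $J^{2}=aJ+bI$ on both sides by $\hat{J}^{2}=(J^{-1})^{2}$; since $J^{2}\hat{J}^{2}=I$ this gives $I=a\hat{J}+b\hat{J}^{2}$, which rearranges to $b\hat{J}^{2}+a\hat{J}-I=0$, as required. There is no real obstacle here: every step is elementary linear algebra, and the only point demanding care is confirming that $b\neq 0$ (guaranteed by $b$ being a positive integer) so that division is legitimate and $J$ is genuinely invertible.
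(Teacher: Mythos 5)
Your proposal is correct. Note that the paper does not actually prove this proposition---it is quoted from the reference \cite{hretcanu2013}---so there is no in-text argument to compare against; your derivation (the characteristic equation $\lambda^{2}=a\lambda+b$ for part (i), the observation that the product of the roots is $-b\neq 0$ so $0$ is never an eigenvalue for part (ii), and multiplying $J^{2}=aJ+bI$ by $\hat{J}^{2}$ for part (iii)) is the standard elementary one and is exactly what the cited source does. The only caveat worth recording is that your argument for (i) shows the eigenvalues are \emph{contained in} $\{\rho_{a,b},\,a-\rho_{a,b}\}$, which is the correct reading of the statement: both values need not occur, since for instance $J=\rho_{a,b}I$ satisfies (\ref{3.1}) and has a single eigenvalue.
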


An almost tangent structure $T$, an almost product structure $F,$ and an
almost complex structure $C$ appear in pairs. In other words, $-T,$ $-F$ and 
$-C$ are also an almost tangent structure, an almost product structure, and
an almost complex structure, respectively \cite{golden}.\ 

Proceeding from this point, it can be said that metallic structures appear
in pairs:

\begin{proposition}
If $J$ is a metallic structure, then $\tilde{J}=aI-J$ is also a metallic
structure.
\end{proposition}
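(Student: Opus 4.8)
The plan is to verify directly that $\tilde{J} = aI - J$ satisfies the defining metallic equation $\tilde{J}^2 = a\tilde{J} + bI$ from \eqref{3.1}. Everything reduces to a short computation with the $(1,1)$-tensor algebra, where $I$ is central and $J$ commutes with itself, so the expressions expand exactly as ordinary polynomials in $J$.

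First I would compute $\tilde{J}^2 = (aI - J)^2 = a^2 I - 2aJ + J^2$. The key step is to substitute the hypothesis $J^2 = aJ + bI$ into this expansion, which gives
\begin{equation*}
\tilde{J}^2 = a^2 I - 2aJ + (aJ + bI) = a^2 I - aJ + bI.
\end{equation*}
Next I would compute the right-hand side of the target equation, namely $a\tilde{J} + bI = a(aI - J) + bI = a^2 I - aJ + bI$. Comparing the two displays shows they are identical, so $\tilde{J}^2 = a\tilde{J} + bI$, which is precisely the condition in \eqref{3.1} with the same positive integers $a, b$. Hence $\tilde{J}$ is a metallic structure.

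I would also remark briefly on the consistency of this result with the earlier Proposition: since the eigenvalues of $J$ are $\rho_{a,b}$ and $a - \rho_{a,b}$, the eigenvalues of $\tilde{J} = aI - J$ are $a - \rho_{a,b}$ and $\rho_{a,b}$, i.e. the same pair with the roles interchanged, which is exactly what one expects of a second metallic structure built from the same polynomial. This observation is optional but reinforces that the pairing $J \leftrightarrow aI - J$ is the analogue of the $F \leftrightarrow -F$ pairing for almost product structures mentioned before the statement.

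I do not anticipate a genuine obstacle here: the argument is a one-line substitution and the only thing to be careful about is that $a$ and $b$ must remain positive integers so that $\tilde{J}$ qualifies as a metallic structure in the sense of the Definition — but since the computation produces exactly the same constants $a$ and $b$, this is automatic. The result is therefore purely algebraic and does not use any manifold-specific structure beyond the commutativity of $J$ with the identity.
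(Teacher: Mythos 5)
Your verification is correct, and it is exactly the argument the paper intends: the proposition is stated without proof precisely because the one-line substitution of $J^{2}=aJ+bI$ into $(aI-J)^{2}$ is the whole content. Your remark about the eigenvalues swapping between $\rho_{a,b}$ and $a-\rho_{a,b}$ is a nice consistency check but not needed.
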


It is known from \cite{petridis} that a polynomial structure on a manifold $%
M_{n}$ induces a generalized almost product structure $F$. Therefore, we
make a connection between the metallic structure and the almost product
structure on $M_{n}$.

\begin{proposition}[\protect\cite{hretcanu2013}]
\label{teo 1}If $F$ is an almost product structure on $M_{n}$, then 
\begin{equation}
\begin{array}{cc}
J= & \dfrac{a}{2}I+\left( \dfrac{2\rho _{a,b}-a}{2}\right) F%
\end{array}
\label{3.2}
\end{equation}%
is a metallic structure on $M_{n}.$

Contrarily, if $J$ is a metallic structure on $M_{n},$ then%
\begin{equation}
\begin{array}{cc}
F= & \left( \dfrac{2}{2\rho _{a,b}-a}J-\dfrac{a}{2\rho _{a,b}-a}I\right)
\bigskip%
\end{array}
\label{3.3}
\end{equation}%
is an almost product structure on $M_{n}.$
\end{proposition}

By the use of the relationship between the almost product structure and the
metallic structure in Proposition \ref{teo 1}, we can give the following
definitions:

\begin{definition}
Let $T$ be an almost tangent structure on $M_{n}.$ Then%
\begin{equation*}
J_{t}=\frac{a}{2}I+\left( \frac{2\rho _{a,b}-a}{2}\right) T
\end{equation*}%
is called a tangent metallic structure on $\left( M_{n},T\right) $.

This structure verifies the following equation:%
\begin{equation*}
J_{t}^{2}-aJ_{t}+\frac{a^{2}}{4}I=0\text{.}
\end{equation*}%
Considering this equation in the real field $%
\mathbb{R}
$, i.e. $x^{2}-ax+\dfrac{a^{2}}{4}=0$, we have the tangent real metallic
ratio $\rho _{t}=\dfrac{a}{2}.$
\end{definition}

\begin{definition}
Let $C$ be an almost complex structure on $M_{n}$. Then 
\begin{equation*}
\begin{array}{cc}
J_{c}= & \dfrac{a}{2}I+\left( \dfrac{2\rho _{a,b}-a}{2}\right) C\bigskip%
\end{array}%
\end{equation*}%
is called a complex metallic structure on $\left( M_{n},C\right) $.

The polynomial equation satisfied by $J_{c}$ is%
\begin{equation*}
J_{c}^{2}-aJ_{c}+\frac{a^{2}+2b}{2}I=0.
\end{equation*}%
For the case of $M_{n}=%
\mathbb{R}
$, we get the equation%
\begin{equation*}
x^{2}-ax+\frac{a^{2}+2b}{2}=0
\end{equation*}%
with solutions,%
\begin{equation*}
\begin{array}{cc}
x_{1}=\dfrac{a}{2}+\dfrac{\sqrt{a^{2}+4b}}{2}i, & x_{2}=\dfrac{a}{2}-\dfrac{%
\sqrt{a^{2}+4b}}{2}i.%
\end{array}%
\end{equation*}%
The positive root of equation 
\begin{equation*}
\rho _{c}=\frac{a}{2}+\frac{\sqrt{a^{2}+4b}}{2}i
\end{equation*}%
is called a complex metallic ratio.
\end{definition}

\section{Examples of The Metallic Structures}

In this section, we give some examples of the metallic structure.

\begin{example}[Clifford algebras]
Let $C^{\prime }(n)$ be the real Clifford algebra of the positive definite
form $\sum_{i=1}^{n}(x^{i})^{2}$ of $%
\mathbb{R}
^{n}$ \cite{lounesto}$.$ According to the Clifford product, the standard
base of $%
\mathbb{R}
^{n}$satisfies the following relations:%
\begin{equation}
\begin{array}{ccc}
e_{i}^{2}=1 & , & i=j \\ 
e_{i}e_{j}+e_{j}e_{i}=0 & , & i\neq j%
\end{array}%
.  \label{4.1}
\end{equation}%
Therefore, using $J_{i}=\dfrac{1}{2}\left( a+\sqrt{a^{2}+4b}e_{i}\right) $
and (\ref{4.1}), we derive a new representation of the Clifford algebra:%
\begin{equation*}
\left\{ 
\begin{array}{lcc}
J_{i},\quad \text{metallic structure} & , & i=j \\ 
J_{i}J_{j}+J_{j}J_{i}=a\left( J_{i}+J_{j}\right) -\dfrac{a^{2}}{2} & , & 
i\neq j.%
\end{array}%
\right.
\end{equation*}%
In \cite{lounesto}, $e_{1}$ and $e_{2}$, orthonormal basis vectors of $%
\mathbb{R}
_{2}^{2},$ are determined by%
\begin{equation*}
\begin{array}{ccccc}
1=I_{2} & , & e_{1}\simeq \left( 
\begin{array}{cc}
1 & 0 \\ 
0 & -1%
\end{array}%
\right) & , & e_{2}\simeq \left( 
\begin{array}{cc}
0 & 1 \\ 
1 & 0%
\end{array}%
\right)%
\end{array}%
\end{equation*}%
and hence we get%
\begin{equation}
\begin{array}{ccccc}
(i) & J_{1} & = & \dfrac{1}{2}\left( a+\sqrt{a^{2}+4b}e_{1}\right) & =\left( 
\begin{array}{cc}
\frac{a+\sqrt{a^{2}+4b}}{2} & 0 \\ 
0 & \frac{a-\sqrt{a^{2}+4b}}{2}%
\end{array}%
\right) \\ 
&  & = & \left( 
\begin{array}{cc}
\rho _{a,b} & 0 \\ 
0 & a-\rho _{a,b}%
\end{array}%
\right) &  \\ 
(ii) & J_{2} & = & \dfrac{1}{2}\left( a+\sqrt{a^{2}+4b}e_{2}\right) & =%
\dfrac{1}{2}\left( 
\begin{array}{cc}
a & \sqrt{a^{2}+4b} \\ 
\sqrt{a^{2}+4b} & a%
\end{array}%
\right) .%
\end{array}
\label{7.7}
\end{equation}
\end{example}

\begin{example}[2D Metallic matrices]
For $J\in 
\mathbb{R}
_{n}^{n}$, if $J$ provides the equation 
\begin{equation}
J^{2}=aJ+bI_{n}  \label{4.14}
\end{equation}%
then this matrix is called a metallic matrix where $I_{n}$ is the identity
matrix on $%
\mathbb{R}
_{n}^{n}.$

In $%
\mathbb{R}
_{2}^{2},$ we obtain a two-parametric family of the metallic structures by
solving (\ref{4.14}),

$i)$ For $r,t$ $\in 
\mathbb{R}
$ and $s\in 
\mathbb{R}
-\left\{ 0\right\} ,$%
\begin{equation}
\begin{array}{ccc}
J_{r,s}= & \left( 
\begin{array}{cc}
r & -\frac{1}{s}\left( r^{2}-ar-b\right) \\ 
s & a-r%
\end{array}%
\right) & or \\ 
J_{s,t}= & \left( 
\begin{array}{cc}
a-t & -\frac{1}{s}\left( t^{2}-at-b\right) \\ 
s & t%
\end{array}%
\right) & .%
\end{array}
\label{4.15}
\end{equation}%
ii) For $r=\rho _{a,b},$ $s\in 
\mathbb{R}
,$ 
\begin{equation*}
\begin{array}{cccc}
J_{\rho _{a,b},s}=\left( 
\begin{array}{cc}
\rho _{a,b} & 0 \\ 
s & a-\rho _{a,b}%
\end{array}%
\right) & or & J_{a-\rho _{a,b},s}=\left( 
\begin{array}{cc}
a-\rho _{a,b} & 0 \\ 
s & \rho _{a,b}%
\end{array}%
\right) & or \\ 
J_{\rho _{a,b},s}=\left( 
\begin{array}{cc}
\rho _{a,b} & s \\ 
0 & a-\rho _{a,b}%
\end{array}%
\right) & or & J_{a-\rho _{a,b},s}=\left( 
\begin{array}{cc}
a-\rho _{a,b} & s \\ 
0 & \rho _{a,b}%
\end{array}%
\right) . & 
\end{array}%
\end{equation*}%
iii) For $r=\rho _{a,b},$ $s=0,$ 
\begin{equation*}
\begin{array}{cccc}
J_{\rho _{a,b},0}=\left( 
\begin{array}{cc}
\rho _{a,b} & 0 \\ 
0 & a-\rho _{a,b}%
\end{array}%
\right) & or & J_{a-\rho _{a,b},s}=\left( 
\begin{array}{cc}
a-\rho _{a,b} & 0 \\ 
0 & \rho _{a,b}%
\end{array}%
\right) . & 
\end{array}%
\end{equation*}%
Then, from (\ref{7.7}) and (\ref{4.15}), we get%
\begin{equation*}
\begin{array}{ccc}
J_{1}=\lim\limits_{s\rightarrow 0}J_{\rho _{a,b},s} & \text{and} & J_{2}=J_{%
\frac{a}{2},\frac{\sqrt{a^{2}+4b}}{2}}.%
\end{array}%
\end{equation*}
\end{example}

\begin{example}[Metallic reflection]
In an Euclidean space $E,$ the reflection with respect to a hyperplane $H$
with the normal $v\in E\backslash \left\{ 0\right\} $ has the formula 
\begin{equation*}
\begin{array}{cc}
r_{v}(x)=x-\dfrac{2\left\langle x,v\right\rangle }{\left\langle
v,v\right\rangle }v & \text{, }x\in E.%
\end{array}%
\end{equation*}%
For $r_{v}^{2}=I_{E}$, $I_{E}$ is the identity on $E$ \cite{processi}.

We can define a metallic reflection with respect to $v$ as%
\begin{equation*}
J_{v}=\frac{1}{2}\left( aI_{E}+\sqrt{a^{2}+4b}r_{v}\right)
\end{equation*}%
where $v$ is an eigenvector of $J_{v}$ with the corresponding eigenvalue $%
a-\rho _{a,b}$. In \cite[p.314]{processi}, under the condition that $X$ is
an orthogonal group of $E,$ we can give the following:

r transformation can be stated explicitly as 
\begin{equation*}
J_{v}(x)=\rho _{a,b}x-\frac{\sqrt{a^{2}+4b}\left\langle x,v\right\rangle }{%
\left\langle v,v\right\rangle }v.
\end{equation*}
\end{example}

\begin{example}[Triple structure in terms of metallic structures]
Let $F$ and $T$ denote two tensor fields of type $\left( 1,1\right) $ on $%
M_{n}$. Using the triple $\left( F,T,K=T\circ F\right) ,$ we obtain the
following structures:\newline
1) $F^{2}=T^{2}=I$ and $T\circ F-F\circ T=0$; then $J^{2}=I$,\newline
2) $F^{2}=T^{2}=I$ and $T\circ F+F\circ T=0$; then $J^{2}=-I$,\newline
3) $F^{2}=T^{2}=-I$ and $T\circ F-F\circ T=0$; then $J^{2}=I$,\newline
4) $F^{2}=T^{2}=-I$ and $T\circ F+F\circ T=0$; then $J^{2}=-I$.\newline
These structures are called almost hyperproduct (ahp), almost biproduct
complex (abpc),almost product bicomplex (apbc), and almost hypercomplex
(ahc), respectively \cite{cruceanu}.\newline
Taking into account (\ref{3.2}), we get%
\begin{equation*}
\begin{array}{lll}
J_{F}=\dfrac{a}{2}I+\left( \dfrac{2\rho _{a,b}-a}{2}\right) F, & J_{T}=%
\dfrac{a}{2}I+\left( \dfrac{2\rho _{a,b}-a}{2}\right) T, & J_{K}=\dfrac{a}{2}%
I+\left( \dfrac{2\rho _{a,b}-a}{2}\right) K.%
\end{array}%
\end{equation*}%
Then, we find a relation between $J_{F}$, $J_{T}$, $J_{K}$ as,%
\begin{equation*}
\sqrt{a^{2}+4b}J_{K}=2J_{T}J_{F}-aJ_{T}-aJ_{F}+\rho _{a,b}^{2}I-bI.
\end{equation*}%
Hence, the triple $\left( J_{F},J_{T},J_{K}\right) $ is,\newline
1$^{\prime }$) An (ahp)-structure if and only if $J_{F},$ $J_{T}$ are
metallic structures\ and $J_{F}J_{T}=J_{T}J_{F},$ then $J_{K}$ is a metallic
structure.$\newline
$2$^{\prime }$) An (abpc)-structure if and only if $J_{F},$ $J_{T}$ are
metallic structures\ and $J_{T}J_{F}+J_{F}J_{T}=a\left( J_{T}+J_{F}\right) -%
\dfrac{1}{2}a^{2}I,$ then $J_{K}$ is a complex metallic structure.\newline
3$^{\prime }$) An (apbc)-structure if and only if $J_{F},$ $J_{T}$ are
complex metallic structures\ and $J_{F}J_{T}=J_{T}J_{F},$ then $J_{K}$ is a
metallic structure.\newline
4$^{\prime }$) An (ahc)-structure if and only if $J_{F},$ $J_{T}$ are
complex metallic structures\ and $J_{T}J_{F}+J_{F}J_{T}=a\left(
J_{T}+J_{F}\right) -\dfrac{1}{2}a^{2}I$ then, $J_{K}$ is a complex metallic
structure.
\end{example}

\begin{example}[Quaternion algebras]
Let $\tilde{H}$ be a quaternion algebra. The base of $\tilde{H}$ is $\left\{
1,e_{1},e_{2},e_{3}\right\} ,$ which verifies 
\begin{equation*}
\begin{array}{lll}
& e_{1}^{2}=e_{2}^{2}=e_{3}^{2}=-1\bigskip &  \\ 
e_{1}e_{2}=-e_{2}e_{1}=e_{3}, & e_{3}e_{1}=-e_{1}e_{3}=e_{2}, & 
e_{2}e_{3}=-e_{3}e_{2}=e_{1}.%
\end{array}%
\end{equation*}
\end{example}

\begin{equation*}
q=a_{0}1+a_{1}e_{1}+a_{2}e_{2}+a_{3}e_{3}
\end{equation*}%
is a quaternion, which can be divided into two parts, such as $S_{q}$ is a
scalar part and $\vec{V}_{q}$ is a vector part. In this way, $%
\begin{array}{l}
q=S_{q}+\vec{V}_{q}%
\end{array}%
$ where $S_{q}=a_{0}$ and $\vec{V}_{q}=a_{1}e_{1}+a_{2}e_{2}+a_{3}e_{3}.$

The norm of a quaternion is determined by $N_{q}=\sqrt{%
a_{0}^{2}+a_{1}^{2}+a_{2}^{2}+a_{3}^{2}}$. Also,\newline
$q_{0}=\frac{q}{N_{q}}$ is a unit quaternion $\left( q\neq 0\right) .$ A
unit form can be written by $q_{0}=\cos \alpha +\vec{S}_{0}\sin \alpha $
where $\vec{S}_{0}$ is a unit vector verifying the equality $\vec{S}%
_{0}^{2}=-1.$

Hence, by the inspire of \cite{yardimci} we can define the metallic
biquaternion structure as

$%
\begin{array}{lll}
J_{q}=\dfrac{a}{2}+\dfrac{\sqrt{a^{2}+4b}i}{2}\vec{S}_{0} & \text{where} & 
\vec{S}_{0}^{2}=-1\text{ and \ }i^{2}=-1.%
\end{array}%
$

Similarly, we can define the metallic split quaternion structure as

$%
\begin{array}{lll}
J_{q}=\dfrac{a}{2}+\vec{S}_{0}\dfrac{\sqrt{a^{2}+4b}}{2} & \text{where} & 
\vec{S}_{0}^{2}=1.%
\end{array}%
$

\section{Connection As Metallic Structure}

\subsection{Connections in principal fibre bundles}

Let $P\left( \pi ,M_{n},G\right) $ be a principal fibre bundle where $\pi
:P\rightarrow M_{n}$ is a fibre projection and $G$ is a Lie group. $V=\ker
\pi _{\ast }$ is the vertical distribution on $P,$ $H$ is the complementary
distribution of $V$, i.e. $TP=V\oplus H$, and $H$ is $G-$invariant. The
corresponding projectors of $V$ and $H$ are $v$ and $h,$ respectively. So,
the tensor field of type $\left( 1,1\right) $%
\begin{equation*}
F=v-h
\end{equation*}%
is an almost product structure on $P$. From \cite{golden}, $F$ defines a
connection if and only if the followings are satisfied:

1) $F\left( X\right) =X\Longleftrightarrow X\in V$

2) $dR_{e}\circ F_{u}=F_{ue}\circ dR_{e}$ for all $e\in G$ and $u\in P$.

By use of the relation between the almost product structure and the metallic
structure, we get the following assertion:

\begin{proposition}
The metallic structure $J$ on $P$ indicates a connection if and only if the
following properties are satisfied:

1$^{\prime }$) $X\in \chi \left( P\right) ,$ where $\chi \left( P\right) $
is the Lie algebra of vector fields on $P,$ is an eigenvector of $J$ with
respect to the eigenvalue $\rho _{a,b}$ if and only if $X$ is a vertical
vector field.

2$^{\prime }$) $dR_{e}\circ J_{u}=J_{ue}\circ dR_{e}$ for all $e\in G$ and $%
u\in P$.
\end{proposition}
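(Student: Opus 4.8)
The plan is to exploit the explicit, invertible affine correspondence between almost product structures and metallic structures recorded in Proposition \ref{teo 1}, and simply transport the two connection conditions on $F$ across this correspondence. Recall that $F=v-h$ is the almost product structure built from the vertical and horizontal projectors, and that $F$ defines a connection precisely when conditions (1) and (2) above hold. Writing $J=\frac{a}{2}I+\left(\frac{2\rho_{a,b}-a}{2}\right)F$ and, inversely, $F=\frac{2}{2\rho_{a,b}-a}J-\frac{a}{2\rho_{a,b}-a}I$, the entire argument reduces to verifying that (1) is equivalent to (1$'$) and (2) is equivalent to (2$'$); the proposition then follows at once from the characterization of connections via $F$.

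First I would handle condition (1$'$). The claim amounts to the assertion that the $+1$-eigenspace of $F$ coincides with the $\rho_{a,b}$-eigenspace of $J$. If $F(X)=X$, then substituting into the formula for $J$ gives $J(X)=\frac{a}{2}X+\frac{2\rho_{a,b}-a}{2}X=\rho_{a,b}X$, so $X$ is a $\rho_{a,b}$-eigenvector of $J$. Conversely, if $J(X)=\rho_{a,b}X$, then the inverse formula yields $F(X)=\frac{2\rho_{a,b}-a}{2\rho_{a,b}-a}X=X$. Combining this eigenspace equivalence with condition (1), which identifies the $+1$-eigenspace of $F$ with the vertical distribution $V$, one obtains that $X$ is a $\rho_{a,b}$-eigenvector of $J$ if and only if $X$ is vertical, which is precisely (1$'$).

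Next I would treat condition (2$'$), the $G$-equivariance. Because $dR_e$ is linear and intertwines the identity operators, $dR_e\circ\left(\frac{a}{2}I\right)=\frac{a}{2}\,dR_e=\left(\frac{a}{2}I\right)\circ dR_e$, so expanding both composites via the formula for $J$ gives $dR_e\circ J_u=\frac{a}{2}\,dR_e+\frac{2\rho_{a,b}-a}{2}\,dR_e\circ F_u$ and $J_{ue}\circ dR_e=\frac{a}{2}\,dR_e+\frac{2\rho_{a,b}-a}{2}\,F_{ue}\circ dR_e$. The scalar terms coincide automatically, so the two operators agree if and only if $dR_e\circ F_u=F_{ue}\circ dR_e$, which is condition (2).

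The only point requiring care, and the nearest thing to an obstacle, is the invertibility of the correspondence: the coefficient $\frac{2\rho_{a,b}-a}{2}=\frac{\sqrt{a^{2}+4b}}{2}$ must be nonzero, which holds because $b>0$. This nonvanishing is exactly what permits cancellation of the coefficient in both directions above, and it guarantees that passing from $F$ to $J$ loses no information, so that each condition on $F$ is genuinely equivalent to its counterpart on $J$. With (1)$\Leftrightarrow$(1$'$) and (2)$\Leftrightarrow$(2$'$) in hand, the equivalence asserted in the proposition is established.
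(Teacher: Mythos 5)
Your proof is correct and follows exactly the route the paper intends: the paper states this proposition without an explicit proof, deriving it directly from the affine correspondence $J=\frac{a}{2}I+\frac{2\rho_{a,b}-a}{2}F$ of Proposition \ref{teo 1}, which is precisely the transport argument you carry out. Your explicit verification of the eigenspace matching and of the equivariance, together with the observation that the coefficient $\frac{\sqrt{a^{2}+4b}}{2}$ is nonzero, fills in the details the paper leaves implicit.
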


Let $\omega \in \Omega ^{1}\left( P,g\right) $ be the connection $1-$form of 
$H$ and suppose that $\Omega \in \Omega ^{2}\left( P,g\right) $ be the
curvature form of $\omega $ where $g$ is the Lie algebra of $G$. Then, we
get \cite{golden},

\begin{equation}
\Omega \left( X,Y\right) =-\frac{1}{4}\omega \left( N_{F}\left( X,Y\right)
\right)  \label{5.3}
\end{equation}%
where $N_{F}$ is the Nijenhuis tensor of $F$.

\begin{proposition}
Using (\ref{3.2}), we obtain%
\begin{equation}
N_{F}=\frac{4}{\left( 2\rho _{a,b}-a\right) ^{2}}N_{J}  \label{5.4}
\end{equation}%
and then we get%
\begin{equation*}
\Omega \left( X,Y\right) =-\frac{1}{\left( 2\rho _{a,b}-a\right) ^{2}}\omega
\left( N_{J}\left( X,Y\right) \right) .
\end{equation*}%
The connection is flat if its curvature form $\Omega \equiv 0.$
\end{proposition}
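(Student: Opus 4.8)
The plan is to exploit the affine relation between $J$ and $F$ recorded in (\ref{3.2}) together with the way the Nijenhuis tensor behaves under such affine changes. Recall that for a $(1,1)$-tensor field $A$ the Nijenhuis tensor is
\begin{equation*}
N_A(X,Y) = [AX,AY] - A[AX,Y] - A[X,AY] + A^2[X,Y].
\end{equation*}
Writing $\alpha = \dfrac{2\rho_{a,b}-a}{2}$ and $\beta = \dfrac{a}{2}$, formula (\ref{3.2}) reads $J = \alpha F + \beta I$; so the first task is to understand how $N_A$ changes when $A$ is replaced by $\alpha A + \beta I$ for real constants $\alpha,\beta$.

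First I would establish the general identity $N_{\alpha A + \beta I} = \alpha^2\,N_A$. Setting $B = \alpha A + \beta I$ and inserting $BX = \alpha AX + \beta X$ into each of the four terms of $N_B$, I would expand using $\mathbb{R}$-bilinearity of the Lie bracket (so the constants $\alpha,\beta$ pull out freely) together with $B^2 = \alpha^2 A^2 + 2\alpha\beta A + \beta^2 I$. The key point is that the contributions linear in $\beta$ cancel: the $\alpha\beta[AX,Y]$ and $\alpha\beta[X,AY]$ produced by $[BX,BY]$ are annihilated by the matching pieces of $-B[BX,Y]$ and $-B[X,BY]$, the three $A[X,Y]$ terms sum to zero, and all the pure $[X,Y]$ contributions (coefficient $\beta^2$) likewise cancel. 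What remains is exactly $\alpha^2$ times the four defining terms of $N_A$. This bookkeeping is the main---indeed essentially the only---obstacle; it is routine but must be done carefully to confirm that the identity summand contributes nothing past the overall factor $\alpha^2$.

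With this identity available, applying it to $A = F$ with $\alpha = \dfrac{2\rho_{a,b}-a}{2}$ gives $N_J = \left(\dfrac{2\rho_{a,b}-a}{2}\right)^2 N_F = \dfrac{(2\rho_{a,b}-a)^2}{4}\,N_F$, which is exactly (\ref{5.4}) after rearrangement. Finally I would feed (\ref{5.4}) into the curvature relation (\ref{5.3}); since $\omega$ is linear,
\begin{equation*}
\Omega(X,Y) = -\frac{1}{4}\,\omega\bigl(N_F(X,Y)\bigr) = -\frac{1}{4}\cdot\frac{4}{(2\rho_{a,b}-a)^2}\,\omega\bigl(N_J(X,Y)\bigr) = -\frac{1}{(2\rho_{a,b}-a)^2}\,\omega\bigl(N_J(X,Y)\bigr),
\end{equation*}
which is the stated formula. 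The concluding remark that the connection is flat precisely when $\Omega\equiv 0$ is just the definition of flatness; by the displayed identity this is equivalent to $\omega(N_J)\equiv 0$, so nothing further needs to be shown.
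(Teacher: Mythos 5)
Your proposal is correct and follows exactly the route the paper intends (the paper states the proposition with only the hint ``Using (\ref{3.2})'' and omits the computation): you substitute the affine relation $J=\alpha F+\beta I$ with constant $\alpha=\tfrac{2\rho_{a,b}-a}{2}$ into the four terms of the Nijenhuis tensor, verify that all terms involving $\beta$ cancel so that $N_{J}=\alpha^{2}N_{F}$, and then insert the result into (\ref{5.3}). The cancellation bookkeeping and the use of $\mathbb{R}$-bilinearity of the bracket (constants pull out) are handled correctly, so nothing is missing.
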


\begin{proposition}
The connection is flat if and only if the associated metallic structure is
integrable, i.e. $N_{J}\equiv 0$.
\end{proposition}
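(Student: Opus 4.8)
The plan is to chain together the two preceding propositions, which already carry almost all of the analytic content. From equation (\ref{5.3}) we know that the curvature form $\Omega$ of the connection is expressed through the Nijenhuis tensor $N_{F}$ of the associated almost product structure $F$; and from equation (\ref{5.4}) together with the displayed consequence in the previous proposition, we know that $\Omega(X,Y) = -\frac{1}{(2\rho_{a,b}-a)^{2}}\,\omega(N_{J}(X,Y))$, where $N_{J}$ is the Nijenhuis tensor of the metallic structure $J$. So the entire statement reduces to reading off an equivalence from this single identity.

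First I would recall that the connection is defined to be flat precisely when $\Omega \equiv 0$, as noted at the end of the preceding proposition, and that $J$ is called integrable precisely when $N_{J} \equiv 0$. The forward direction is then almost immediate: if $N_{J}\equiv 0$, then substituting into the displayed formula for $\Omega$ gives $\Omega(X,Y)=0$ for all $X,Y\in\chi(P)$, hence $\Omega\equiv 0$ and the connection is flat. The key point to check here is simply that the scalar coefficient $-\frac{1}{(2\rho_{a,b}-a)^{2}}$ is finite and nonzero, which holds because $2\rho_{a,b}-a=\sqrt{a^{2}+4b}>0$ for positive integers $a,b$; this same nonvanishing is already implicit in Proposition \ref{teo 1}, where one divides by $2\rho_{a,b}-a$ to pass from $J$ back to $F$.

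For the converse I would exploit exactly that nonvanishing of the coefficient, but the argument is slightly more delicate because $\omega$ sits between $\Omega$ and $N_{J}$. Assuming the connection is flat, so $\Omega\equiv 0$, the identity gives $\omega(N_{J}(X,Y))=0$ for all $X,Y$. To conclude $N_{J}\equiv 0$ I would argue that $N_{J}(X,Y)$ is a vertical vector field — this follows because $N_{F}$ and $N_{J}$ differ only by the nonzero scalar in (\ref{5.4}), and the Nijenhuis tensor of the connection's almost product structure $F=v-h$ takes values in the vertical distribution $V=\ker\pi_{\ast}$ — and then use that the connection form $\omega$ restricts to an isomorphism on $V$ (it reproduces the Lie-algebra value of a fundamental vertical field). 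Hence $\omega(N_{J}(X,Y))=0$ forces $N_{J}(X,Y)=0$.

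The main obstacle, and the step deserving the most care, is this converse direction: establishing that the vanishing of $\omega\circ N_{J}$ actually forces $N_{J}\equiv 0$ rather than merely its "horizontal shadow." This requires invoking the structural fact that the Nijenhuis tensor of the product structure $F=v-h$ associated to a connection is valued in the vertical distribution together with the fact that $\omega$ is injective on that distribution. Both facts are standard properties of connections in principal bundles and are the content behind (\ref{5.3}) in \cite{golden}; everything else in the proof is a direct substitution into the identity $\Omega(X,Y)=-\frac{1}{(2\rho_{a,b}-a)^{2}}\,\omega(N_{J}(X,Y))$ and the observation that its scalar factor never vanishes.
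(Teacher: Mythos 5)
Your proposal is correct and follows essentially the same route as the paper, which states this proposition as an immediate consequence of the identity $\Omega(X,Y)=-\frac{1}{(2\rho_{a,b}-a)^{2}}\,\omega\left(N_{J}(X,Y)\right)$ and the definition of flatness, offering no further proof. You in fact supply more detail than the paper does on the only nontrivial point, the converse direction, by correctly invoking that $N_{F}$ (hence $N_{J}$) is vertical-valued and that $\omega$ is injective on the vertical distribution, so that $\omega\circ N_{J}\equiv 0$ forces $N_{J}\equiv 0$.
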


This connection yields a lift $l_{\omega }:\chi \left( M_{n}\right)
\rightarrow \chi \left( P\right) $ verifying%
\begin{equation}
\left[ l_{\omega }X^{\ast },l_{\omega }Y^{\ast }\right] -l_{\omega }\left[
X^{\ast },Y^{\ast }\right] =N_{F}\left( l_{\omega }X^{\ast },l_{\omega
}Y^{\ast }\right)  \label{5.6*}
\end{equation}%
for every $X^{\ast },$ $Y^{\ast }\in \chi \left( M_{n}\right) $ \cite{golden}%
.

Hence, using (\ref{5.3}) and (\ref{5.6*}):

\begin{proposition}
The lift defined by $l_{\omega }$ is a morphism if and only if the
associated metallic structure is integrable.
\end{proposition}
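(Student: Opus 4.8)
The plan is to read ``morphism'' as the requirement that $l_{\omega}$ respect the Lie bracket, i.e. $[l_{\omega}X^{\ast},l_{\omega}Y^{\ast}]=l_{\omega}[X^{\ast},Y^{\ast}]$ for all $X^{\ast},Y^{\ast}\in \chi\left( M_{n}\right)$, and then to extract the obstruction directly from the identity (\ref{5.6*}). Comparing the two sides of (\ref{5.6*}), the lift $l_{\omega}$ is a morphism precisely when $N_{F}\left( l_{\omega}X^{\ast},l_{\omega}Y^{\ast}\right) =0$ for every pair $X^{\ast},Y^{\ast}$. Hence the whole statement reduces to showing that the vanishing of $N_{F}$ on the horizontal lifts is equivalent to the full integrability condition $N_{J}\equiv 0$.

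First I would record that the fields $l_{\omega}X^{\ast}$ are exactly the horizontal lifts and that, as $X^{\ast}$ ranges over $\chi\left( M_{n}\right)$, they span the horizontal distribution $H$ at every point. Since $N_{F}$ is a tensor field (of type $\left( 1,2\right)$), it is enough to understand it on a frame adapted to the splitting $TP=V\oplus H$. The decisive observation is that $N_{F}$ vanishes automatically whenever at least one argument is vertical: because $V=\ker \pi _{\ast }$ is integrable, being tangent to the fibres, a short expansion of the Nijenhuis tensor of $F=v-h$, using $F|_{V}=\mathrm{id}$ and $F|_{H}=-\mathrm{id}$, gives $N_{F}\left( X,Y\right) =0$ when $X,Y$ are both vertical and when $X$ is horizontal, $Y$ vertical. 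Thus the only possibly nonzero component of $N_{F}$ lives on pairs of horizontal vectors, and that component is recorded precisely by $N_{F}\left( l_{\omega}X^{\ast},l_{\omega}Y^{\ast}\right)$.

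Combining these facts, $N_{F}\left( l_{\omega}X^{\ast},l_{\omega}Y^{\ast}\right) =0$ for all $X^{\ast},Y^{\ast}$ is equivalent to $N_{F}\equiv 0$ on $P$. Finally I would invoke (\ref{5.4}), namely $N_{F}=\frac{4}{\left( 2\rho _{a,b}-a\right) ^{2}}N_{J}$; since $2\rho _{a,b}-a=\sqrt{a^{2}+4b}\neq 0$, this proportionality forces $N_{F}\equiv 0$ if and only if $N_{J}\equiv 0$, i.e. the metallic structure $J$ is integrable. Chaining the equivalences then yields the proposition (and recovers the link with flatness established in the preceding propositions).

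The step I expect to be the main obstacle is the middle one: the identity (\ref{5.6*}) furnishes information only about $N_{F}$ evaluated on horizontal lifts, whereas integrability demands that $N_{F}$ (equivalently $N_{J}$) vanish on all of $TP$. Bridging this gap cleanly rests on the tensorial vanishing of $N_{F}$ on any pair containing a vertical vector, which itself relies on the integrability of $V$; this is the point to argue with care, after which the remaining implications follow immediately from (\ref{5.4}).
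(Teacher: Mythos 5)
Your proposal is correct, and it follows the route the paper itself gestures at: the paper prints no proof of this proposition, merely the phrase ``Hence, using (\ref{5.3}) and (\ref{5.6*})'', so the reader is left to supply exactly the argument you give. The one substantive step you add --- and you rightly flag it as the crux --- is the bridge from ``$N_{F}$ vanishes on pairs of horizontal lifts'' to ``$N_{F}\equiv 0$ on all of $TP$''. Your computation there is sound: with $F=v-h$ one finds $N_{F}(X,Y)=0$ identically for mixed vertical--horizontal pairs (no hypothesis needed, by antisymmetry and $F^{2}=I$), $N_{F}(X,Y)=2\left( [X,Y]-F[X,Y]\right) =0$ for two vertical fields by integrability of $V=\ker \pi _{\ast }$, and $N_{F}(X,Y)=4v[X,Y]$ for two horizontal fields, which is precisely what (\ref{5.6*}) records on lifts; since horizontal lifts span $H$ pointwise and $N_{F}$ is tensorial, vanishing on lifts is equivalent to $N_{F}\equiv 0$, hence to $N_{J}\equiv 0$ by (\ref{5.4}). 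The only cosmetic divergence from the paper's intended path is that the paper routes the final equivalence through the curvature form via (\ref{5.3}) and the preceding flatness proposition, whereas you invoke the proportionality (\ref{5.4}) directly; the two are interchangeable because $\omega $ is injective on vertical vectors and $N_{F}$ of two horizontal fields is vertical. In short: correct, same approach, with the paper's implicit gap properly closed.
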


\subsection{Connection in tangent bundles}

Let $\pi :TM_{n}\rightarrow M_{n}$ be the tangent bundle of $M_{n}$. $%
V\left( M_{n}\right) =\ker \pi _{\ast }$ $\left( \pi _{\ast
}:TTM_{n}\rightarrow TM_{n}\right) $ is called vertical distribution of $%
M_{n}$. $\left( x^{i}\right) _{1\leq i\leq n}$ is a local coordinate system
on $M_{n}$ and $\left( x,y\right) =\left( x^{i},y^{i}\right) _{1\leq i\leq
n} $ is a local coordinate system on $TM_{n}$. For an atlas on $TM_{n}$ with
these local coordinates, the tangent structure of $TM_{n}$ is $T=\frac{%
\partial }{\partial y^{i}}\otimes dx^{i},$ i.e.%
\begin{equation*}
\begin{array}{ccc}
T\left( \frac{\partial }{\partial x^{i}}\right) =\frac{\partial }{\partial
y^{i}} & , & T\left( \frac{\partial }{\partial y^{i}}\right) =0.%
\end{array}%
\end{equation*}%
$v:\chi \left( M_{n}\right) \rightarrow \chi \left( M_{n}\right) $ is a $%
\left( 1,1\right) $ tensor field verifying%
\begin{equation*}
\left\{ 
\begin{array}{c}
T\circ v=0 \\ 
v\circ T=T%
\end{array}%
\right.
\end{equation*}%
which is called the vertical projector.

\begin{definition}[\protect\cite{golden}]
A complementary distribution $N$ to the vertical distribution $V\left(
M_{n}\right) $%
\begin{equation}
\chi \left( M_{n}\right) =N\oplus V\left( M_{n}\right)  \label{5.8}
\end{equation}%
is called normalization, horizontal distribution or non-linear connection.
\end{definition}

Having in mind that a vertical projector $v$ is $C^{\infty }\left(
M_{n}\right) -$linear with $imv=V\left( M_{n}\right) $, we get:

\begin{proposition}[\protect\cite{golden}]
With the help of $\ker v=N\left( v\right) $, a vertical vector $v$ yields a
non-linear connection. Viceversa, if $N$ is a non-linear connection, then $%
h_{N}$ and $v_{N}$ are horizontal and vertical projectors, respectively,
with respect to the decomposition (\ref{5.8}).
\end{proposition}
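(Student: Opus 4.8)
The plan is to reduce the entire statement to one observation: the two defining identities of a vertical projector, $T\circ v=0$ and $v\circ T=T$, already force $v$ to be an idempotent with image exactly $V(M_n)$. Once $v$ is known to be such a projection operator, the correspondence with non-linear connections is nothing more than the standard algebraic splitting $\chi(M_n)=\operatorname{im}v\oplus\ker v$ attached to any idempotent, read in both directions.

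First I would record two structural facts about the tangent structure $T=\frac{\partial}{\partial y^i}\otimes dx^i$. Since $T(\partial/\partial y^i)=0$ while $T(\partial/\partial x^i)=\partial/\partial y^i$, the $\partial/\partial y^i$ span both $\ker T$ and $\operatorname{im}T$, so $\ker T=\operatorname{im}T=V(M_n)$. The identity $T\circ v=0$ then gives $\operatorname{im}v\subseteq\ker T=V(M_n)$. The identity $v\circ T=T$ says that $v$ fixes every element of $\operatorname{im}T=V(M_n)$: any $Y\in V(M_n)$ can be written $Y=T(X)$, whence $v(Y)=v(T(X))=T(X)=Y$. In particular $V(M_n)=v(V(M_n))\subseteq\operatorname{im}v$, and combined with the reverse inclusion this yields $\operatorname{im}v=V(M_n)$, matching the stated property. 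Finally, since every $v(X)$ lies in $V(M_n)$ and $v$ restricts to the identity there, we get $v^2(X)=v(v(X))=v(X)$, i.e. $v^2=v$. Thus $v$ is an idempotent projecting exactly onto $V(M_n)$.

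For the forward implication I would invoke the elementary fact that an idempotent $v$ produces a direct decomposition $\chi(M_n)=\operatorname{im}v\oplus\ker v$: every $X$ splits as $X=v(X)+(X-v(X))$ with $v(X)\in\operatorname{im}v$ and $X-v(X)\in\ker v$, and the sum is direct because any $Y$ in the intersection satisfies $Y=v(Z)$ together with $0=v(Y)=v^2(Z)=v(Z)=Y$. Setting $N(v)=\ker v$ and using $\operatorname{im}v=V(M_n)$ gives precisely $\chi(M_n)=N(v)\oplus V(M_n)$, which is the decomposition (\ref{5.8}) defining a non-linear connection; smoothness is automatic since $v$ is a smooth tensor field of constant rank $n$.

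For the converse I would start from a non-linear connection $N$ with $\chi(M_n)=N\oplus V(M_n)$ and let $h_N$, $v_N$ be the associated projectors onto $N$ and $V(M_n)$. By construction $\operatorname{im}v_N=V(M_n)$ and $v_N$ fixes $V(M_n)$, so it remains only to check the two defining equations: $T\circ v_N=0$ holds because $v_N(X)\in V(M_n)=\ker T$, and $v_N\circ T=T$ holds because $T(X)\in\operatorname{im}T=V(M_n)$ is fixed by $v_N$. Hence $v_N$ is a vertical projector and $h_N=\mathrm{id}-v_N$ the complementary horizontal projector. The only step demanding genuine care is the second paragraph — extracting idempotency and the image identity from the two tensorial equations — after which everything reduces to routine projector bookkeeping.
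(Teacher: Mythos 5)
Your proof is correct. The paper itself gives no proof of this proposition --- it is quoted from the golden-structure paper of Crasmareanu and Hretcanu, and the only justification offered is the one-line remark immediately preceding it, namely that a vertical projector is $C^{\infty}(M_n)$-linear with $\mathrm{im}\,v=V(M_n)$. Your argument supplies exactly the missing content: deriving $\mathrm{im}\,v=V(M_n)$ and $v^{2}=v$ from the two defining identities $T\circ v=0$, $v\circ T=T$ together with $\ker T=\mathrm{im}\,T=V(M_n)$, and then reading off the correspondence with the splitting $\chi = \ker v \oplus \mathrm{im}\,v$ in both directions. This is the intended (and standard) route, carried out in full.
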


Then we give the following proposition:

\begin{proposition}[\protect\cite{golden}]
If $N$ is a non-linear connection then $v_{N}$ is a vertical projector with $%
N\left( v_{N}\right) =N$.
\end{proposition}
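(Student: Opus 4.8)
The plan is to show that the two assignments recorded in the preceding proposition are mutually inverse, so that starting from a non-linear connection $N$, forming its vertical projector $v_N$, and then taking $\ker v_N$ returns $N$ itself. First I would invoke the hypothesis that $N$ is a non-linear connection to obtain the direct-sum decomposition $\chi(M_n) = N \oplus V(M_n)$ from (\ref{5.8}); every $X \in \chi(M_n)$ then has a unique splitting $X = h_N X + v_N X$ with $h_N X \in N$ and $v_N X \in V(M_n)$. By construction $v_N$ is the projection onto the vertical distribution $V(M_n)$ along $N$, and $h_N = I - v_N$ is the projection onto $N$ along $V(M_n)$; these are exactly the projectors supplied by the previous proposition.

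Next I would confirm that $v_N$ qualifies as a vertical projector in the technical sense fixed just before the Definition, i.e. that $T \circ v_N = 0$ and $v_N \circ T = T$. The key remark is that, for the tangent structure $T = \frac{\partial}{\partial y^i} \otimes dx^i$, one has $\mathrm{im}\,T = \ker T = V(M_n)$, since $T$ sends $\frac{\partial}{\partial x^i}$ to $\frac{\partial}{\partial y^i}$ and annihilates $\frac{\partial}{\partial y^i}$. Because $\mathrm{im}\,v_N = V(M_n) = \ker T$, composition gives $T \circ v_N = 0$; and because $\mathrm{im}\,T = V(M_n)$ while the projector $v_N$ restricts to the identity on $V(M_n)$, we obtain $v_N \circ T = T$. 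Combined with the idempotency $v_N^2 = v_N$ and the $C^\infty(M_n)$-linearity of the splitting, this shows $v_N$ is a genuine vertical projector.

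Finally I would read off the stated identity. Using the convention $N(v) = \ker v$ recorded earlier, we have $N(v_N) = \ker v_N$; but $v_N$ annihilates precisely the horizontal summand of the decomposition, so $\ker v_N = N$, and therefore $N(v_N) = N$.

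I do not anticipate a genuine obstacle, since the proposition merely expresses that passing from a connection to its projector and back is the identity on distributions. The one point that warrants care is the relation $v_N \circ T = T$: it depends on correctly identifying $\mathrm{im}\,T$ with $V(M_n)$ and on the fact that $v_N$ fixes vertical vectors, after which every remaining step is immediate from the uniqueness of the direct-sum splitting.
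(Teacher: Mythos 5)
Your argument is correct. The paper offers no proof of this proposition---it is imported verbatim from \cite{golden}---but your direct verification is exactly the standard check and is sound: since $\mathrm{im}\,T=\ker T=V\left(M_{n}\right)$ for the tangent structure $T=\frac{\partial}{\partial y^{i}}\otimes dx^{i}$, the projector onto $V\left(M_{n}\right)$ along $N$ automatically satisfies $T\circ v_{N}=0$ and $v_{N}\circ T=T$, and $\ker v_{N}=N$ follows from the uniqueness of the splitting $X=h_{N}X+v_{N}X$ furnished by (\ref{5.8}).
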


\begin{definition}[\protect\cite{golden}]
If the following relations hold, a $\left( 1,1\right) $ tensor field $\Gamma 
$ is called non-linear connection of an almost product type:%
\begin{equation*}
\left\{ 
\begin{array}{c}
\Gamma \circ T=-T \\ 
T\circ \Gamma =T%
\end{array}%
\right. .
\end{equation*}%
Let $\Gamma $ be a nonlinear connection of an almost product type, the
following assertions are true:\newline
i) $v_{\Gamma }=\frac{1}{2}\left( I_{\chi \left( M_{n}\right) }-\Gamma
\right) $ is a vertical vector,\medskip \newline
ii) While $N\left( v_{\Gamma }\right) $ is the $\left( +1\right) -$%
eigenspace of $\Gamma ,$ $V\left( M_{n}\right) $ is the $\left( -1\right) -$%
eigenspace of $\Gamma $.
\end{definition}

As a result, every vertical projector $v$ yields a non-linear connection of
an almost product type as $\Gamma =I_{\chi \left( M_{n}\right) }-2v$. Using
the last relation, we have $\Gamma ^{2}=I_{\chi \left( M_{n}\right) }$ $%
(\Gamma $ tensor field is an almost product structure on $M_{n})$.

Hence, we associate this result with the metallic structure.

\begin{proposition}
A non-linear connection $N$ on $M_{n}$, given by the vertical vector $v,$
can also be defined by a metallic structure $J\left( =J_{\Gamma }\right) $%
\begin{equation*}
J=\rho _{a,b}I_{\chi \left( M_{n}\right) }-\sqrt{a^{2}+4b}v
\end{equation*}%
with $N$ the $\rho _{a,b}-$eigenspace and $V\left( M_{n}\right) $ the $%
\left( a-\rho _{a,b}\right) -$eigenspace.
\end{proposition}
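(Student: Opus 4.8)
The plan is to realise the stated $J$ as the image, under Proposition \ref{teo 1}, of the almost product structure that the vertical projector $v$ already carries. The discussion immediately preceding the statement establishes that $\Gamma = I_{\chi(M_n)} - 2v$ is a non-linear connection of almost product type satisfying $\Gamma^2 = I_{\chi(M_n)}$, so $\Gamma$ is an almost product structure on $M_n$. Hence the first step is simply to take $F = \Gamma$ in the correspondence (\ref{3.2}).

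Substituting $F = \Gamma = I - 2v$ into $J = \frac{a}{2}I + \left(\frac{2\rho_{a,b}-a}{2}\right)F$ and collecting the two occurrences of $I$ gives $J = \rho_{a,b}I - (2\rho_{a,b}-a)v$. The only algebraic input needed is the identity $2\rho_{a,b} - a = \sqrt{a^2+4b}$, which follows at once from $\rho_{a,b} = \frac{a+\sqrt{a^2+4b}}{2}$ in (\ref{2.1}); this converts the coefficient of $v$ into $\sqrt{a^2+4b}$ and yields the claimed formula. Since $F = \Gamma$ is an almost product structure, Proposition \ref{teo 1} already guarantees that $J$ satisfies (\ref{3.1}); if one prefers a self-contained check, expand $J^2$ using the projector relation $v^2 = v$ and compare with $aJ + bI$, where the $I$-coefficients agree because $\rho_{a,b}^2 = a\rho_{a,b}+b$ and the $v$-coefficients agree because $2\rho_{a,b} = a + \sqrt{a^2+4b}$.

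For the eigenspaces I would invoke the defining properties of the vertical projector, namely $vX = X$ for $X \in V(M_n) = \mathrm{im}\,v$ and $vX = 0$ for $X \in N = \ker v$. On $N$ the formula gives $JX = \rho_{a,b}X$, so $N$ is contained in the $\rho_{a,b}$-eigenspace. On $V(M_n)$ it gives $JX = (\rho_{a,b} - \sqrt{a^2+4b})X$, and since $\rho_{a,b} - \sqrt{a^2+4b} = \frac{a - \sqrt{a^2+4b}}{2} = a - \rho_{a,b}$, the vertical distribution lies in the $(a-\rho_{a,b})$-eigenspace. Because $\rho_{a,b}$ and $a-\rho_{a,b}$ are precisely the two eigenvalues of a metallic structure and $\chi(M_n) = N \oplus V(M_n)$ by (\ref{5.8}), these inclusions are forced to be equalities.

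There is no real obstacle in this argument; it is essentially bookkeeping with the metallic-ratio identities. The only points requiring a little care are the correct evaluation of $2\rho_{a,b}-a$ as $\sqrt{a^2+4b}$ (and of $\rho_{a,b} - \sqrt{a^2+4b}$ as $a-\rho_{a,b}$), and the use of the projector identity $v|_{\mathrm{im}\,v} = \mathrm{id}$, which is what makes $V(M_n)$ an honest eigenspace rather than merely an invariant subspace.
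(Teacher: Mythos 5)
Your proposal is correct and follows exactly the route the paper intends: the paper gives no explicit proof, but the sentence preceding the proposition (``every vertical projector $v$ yields a non-linear connection of an almost product type as $\Gamma =I-2v$'' with $\Gamma ^{2}=I$) makes clear that the statement is obtained by substituting $F=\Gamma =I-2v$ into (\ref{3.2}) and using $2\rho _{a,b}-a=\sqrt{a^{2}+4b}$, which is precisely your computation. Your eigenspace verification via $v|_{\mathrm{im}\,v}=\mathrm{id}$ and $v|_{\ker v}=0$ is the natural completion of that argument and contains no gaps.
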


\section{Integrability and Parallelism of Metallic Structures}

In this section, we give integrability and parallelism conditions of the
metallic structure. Recall the Nijenhuis tensor of $J,$ 
\begin{equation*}
N_{J}\left( X,Y\right) =J^{2}\left[ X,Y\right] +\left[ JX,JY\right] -J\left[
JX,Y\right] -J\left[ X,JY\right]
\end{equation*}%
for every $X,Y\in \chi \left( M_{n}\right) $.

Let $L$ and $M$ be two complementary distributions on $M_{n}$ corresponding
to $\rho _{a,b}$ and $a-\rho _{a,b},$ respectively. The corresponding
projection operators of $L$ and $M$ are denoted by $l$ and $m$, which
results in

\begin{equation}
\left\{ 
\begin{array}{cc}
l^{2}=l, & m^{2}=m \\ 
lm=ml=0, & l+m=I%
\end{array}%
\right. .  \label{7.4}
\end{equation}%
According to the above conditions and based on a straightforward computation
from (\ref{3.2}), we obtain the following equations:

\begin{equation}
\begin{array}{ccc}
l & = & \dfrac{1}{2\rho _{a,b}-a}J-\dfrac{a-\rho _{a,b}}{2\rho _{a,b}-a}%
I\bigskip \\ 
m & = & -\dfrac{1}{2\rho _{a,b}-a}J+\dfrac{\rho _{a,b}}{2\rho _{a,b}-a}I.%
\end{array}
\label{6.2}
\end{equation}

\begin{theorem}
i) If $N_{J}=0,$ $J$ is integrable.

ii) If $m\left[ lX,lY\right] =0,$ the distribution $L$ is integrable.
Similarly, if $l\left[ mX,mY\right] =0,$ the distribution $M$ is integrable
for all vector fields $X,Y$ on $\chi \left( M_{n}\right) $.
\end{theorem}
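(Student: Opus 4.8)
The plan is to dispatch (ii) first with the Frobenius theorem and then to reduce (i) to it.

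For (ii), I would start by reading off from the projector identities (\ref{7.4}) that $L$ is the image of $l$ and equals $\ker m$, while $M$ is the image of $m$ and equals $\ker l$: indeed $l+m=I$ together with $lm=ml=0$ forces $mZ=0,\ lZ=Z$ exactly when $Z\in L$, and symmetrically for $M$. By Frobenius a distribution is integrable if and only if it is involutive, so the task reduces to recognising the hypotheses as involutivity. Every section of $L$ has the form $lX$ (take $X$ equal to the section itself, for which $lX=X$), so involutivity of $L$ says $[lX,lY]\in L$ for all $X,Y\in\chi(M_n)$, which, because $L=\ker m$, is precisely $m[lX,lY]=0$. The identical argument with the roles of $l$ and $m$ exchanged handles $M$ under the hypothesis $l[mX,mY]=0$.

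For (i), I would derive integrability of $J$ from (ii). Here ``$J$ integrable'' means that its two eigendistributions $L$ and $M$ are simultaneously integrable, so it suffices to show that $N_J=0$ forces both bracket conditions of (ii). I would substitute the spectral form $J=\rho_{a,b}\,l+(a-\rho_{a,b})\,m$, read off from (\ref{6.2}), into $N_J(X,Y)=J^{2}[X,Y]+[JX,JY]-J[JX,Y]-J[X,JY]$ and expand each bracket using (\ref{7.4}) and the root relations $\rho_{a,b}+(a-\rho_{a,b})=a$, $\rho_{a,b}(a-\rho_{a,b})=-b$. The expected outcome is that $N_J$ splits as a term carried by $L$, proportional to $m[lX,lY]$, plus a term carried by $M$, proportional to $l[mX,mY]$, the mixed contributions cancelling; then $N_J=0$ yields both conditions and (ii) finishes the proof. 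A more economical alternative is to pass through the associated almost product structure $F$ of Proposition \ref{teo 1}: by the proportionality (\ref{5.4}) one has $N_J=0\iff N_F=0$, and the classical integrability theorem for almost product structures gives a local product decomposition exactly when $N_F=0$; since $J$ and $F$ share $L$ and $M$ through the affine relation (\ref{3.2}), this is the integrability of $J$.

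The main obstacle is the bookkeeping in the Nijenhuis expansion: one must confirm that, after inserting the spectral form of $J$ and simplifying the coefficients with the eigenvalue relations, the cross terms built from $lX$ and $mY$ annihilate one another and the surviving coefficients are nonzero multiples of $m[lX,lY]$ and $l[mX,mY]$. Verifying this cancellation is the only delicate point; once it is in place, both parts of the theorem follow mechanically from Frobenius.
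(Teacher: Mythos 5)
Your argument is correct, but be aware that the paper states this theorem with no proof at all: in the polynomial-structure literature the paper follows, part (i) is essentially the definition of integrability of $J$ (a polynomial structure is called integrable when its Nijenhuis tensor vanishes), and part (ii) is the Frobenius theorem read through the projectors. So there is no printed proof to compare against; what you have written is the standard argument the paper leaves implicit. Your treatment of (ii) --- identifying $L=\mathrm{im}\,l=\ker m$ from (\ref{7.4}), observing that the sections of $L$ are exactly the fields $lX$ (with $lX=X$ for $X$ already in $L$), and translating involutivity into $m[lX,lY]=0$ --- is exactly right. For (i), the Nijenhuis bookkeeping you defer does come out as you predict, with one small correction of wording: the mixed terms $N_{J}(lX,mY)$ and $N_{J}(mX,lY)$ each vanish identically (they do not merely cancel against one another), and the surviving pieces satisfy $mN_{J}(lX,lY)=\left( 2\rho _{a,b}-a\right)^{2}m\left[ lX,lY\right]$ and $lN_{J}(mX,mY)=\left( 2\rho _{a,b}-a\right)^{2}l\left[ mX,mY\right]$; these are precisely the identities the paper records in the Remark immediately following the theorem, so your deferred computation is in effect already done there. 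The one point you should make explicit is what ``$J$ is integrable'' means in (i): if, as is common, it is taken by definition to mean $N_{J}=0$, then (i) is vacuous; under your stronger reading (integrability of both eigendistributions, equivalently a local product structure obtained through the associated almost product structure of (\ref{3.2}) together with (\ref{5.4})), your proof is complete and correct.
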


\begin{remark}
From (\ref{3.1}) and (\ref{6.2}),%
\begin{equation}
\begin{array}{c}
Jl=lJ=\dfrac{\rho _{a,b}}{2\rho _{a,b}-a}J\bigskip +\dfrac{b}{2\rho _{a,b}-a}%
I=\rho _{a,b}l \\ 
Jm=mJ=-\dfrac{a-\rho _{a,b}}{2\rho _{a,b}-a}J-\dfrac{b}{2\rho _{a,b}-a}%
I=\left( a-\rho _{a,b}\right) m.%
\end{array}
\label{6.6}
\end{equation}%
Using (\ref{6.6}), we can easily see that
\end{remark}

\begin{equation*}
\begin{array}{ccc}
l\left[ mX,mY\right] & = & \dfrac{1}{\left( 2\rho _{a,b}-a\right) ^{2}}%
lN_{J}\left( mX,mY\right) ,\bigskip \\ 
m\left[ lX,lY\right] & = & \dfrac{1}{\left( 2\rho _{a,b}-a\right) ^{2}}%
mN_{J}\left( lX,lY\right) .\bigskip%
\end{array}%
\end{equation*}

\begin{proposition}
Using (\ref{5.4}), a metallic structure $J$ is integrable if and only if the
associated almost product structure $F$ is integrable.
\end{proposition}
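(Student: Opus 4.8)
The goal is to show that the metallic structure $J$ is integrable ($N_J \equiv 0$) if and only if the associated almost product structure $F$ is integrable ($N_F \equiv 0$), where $F$ is related to $J$ through equations (3.2) and (3.3). The hint in the statement is to use (5.4), namely
\[
N_F = \frac{4}{\left(2\rho_{a,b}-a\right)^2} N_J .
\]

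The plan is first to establish this proportionality relation (5.4) directly, since the whole proof hinges on it. Starting from (3.2), we have $J = \frac{a}{2}I + \left(\frac{2\rho_{a,b}-a}{2}\right)F$, so $J$ is an affine function of $F$; write $J = \alpha I + \beta F$ with $\alpha = \frac{a}{2}$ and $\beta = \frac{2\rho_{a,b}-a}{2}$. Substituting this into the definition of the Nijenhuis tensor
\[
N_J(X,Y) = J^2[X,Y] + [JX,JY] - J[JX,Y] - J[X,JY],
\]
I would expand each of the four terms. The key observation is that all terms linear in $\alpha$ (i.e.\ involving the identity $I$ acting as scalar multiplication) must cancel, because the Nijenhuis tensor of the identity vanishes and the construction $N$ is insensitive to adding a scalar multiple of $I$; indeed $N_{\alpha I + \beta F}$ depends only on $\beta F$. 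A clean way to see this is to recall the general identity that for any $(1,1)$-tensor $A$ and scalars $\alpha,\beta$, one has $N_{\alpha I + \beta F} = \beta^2 N_F$, since the Nijenhuis bracket is bilinear in its tensor argument in this quadratic-leading sense and the identity contributes nothing to the bracket structure. Applying this with $\beta = \frac{2\rho_{a,b}-a}{2}$ gives $N_J = \beta^2 N_F = \left(\frac{2\rho_{a,b}-a}{2}\right)^2 N_F$, which inverts to exactly (5.4).

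Once (5.4) is in hand, the equivalence is immediate. Since $a$ and $b$ are positive integers, the metallic ratio satisfies $\rho_{a,b} = \frac{a+\sqrt{a^2+4b}}{2}$, so $2\rho_{a,b}-a = \sqrt{a^2+4b} > 0$; in particular the scalar factor $\frac{4}{\left(2\rho_{a,b}-a\right)^2}$ is a nonzero finite constant. Therefore $N_F = 0$ if and only if $N_J = 0$, i.e.\ $F$ is integrable precisely when $J$ is integrable. This completes the argument.

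The main obstacle I anticipate is the careful verification that the affine shift by $\frac{a}{2}I$ genuinely drops out of the Nijenhuis tensor. While this is a standard fact, writing it out term-by-term requires attention: one must confirm that the mixed terms such as $[\alpha X + \beta FX, \alpha Y + \beta FY]$ reorganize so that the $\alpha\beta$ cross terms combine with the $-J[JX,Y]-J[X,JY]$ contributions to cancel, leaving only the pure $\beta^2$ piece. I would handle this by invoking the known invariance of $N$ under adding a scalar multiple of the identity rather than grinding through the expansion, since that invariance is precisely what makes the proportionality (5.4) clean and the final equivalence transparent.
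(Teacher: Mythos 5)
Your proof is correct and follows essentially the same route as the paper: the proposition is an immediate consequence of the proportionality $N_{F}=\tfrac{4}{(2\rho_{a,b}-a)^{2}}N_{J}$ together with the observation that $2\rho_{a,b}-a=\sqrt{a^{2}+4b}\neq 0$. Your additional verification that $N_{\alpha I+\beta F}=\beta^{2}N_{F}$ (the affine shift by $\tfrac{a}{2}I$ dropping out of the Nijenhuis tensor) correctly supplies the derivation of (5.4) that the paper asserts without detail.
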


\begin{proposition}
Let $X,$ $Y\in \chi \left( M_{n}\right) .$ $L$ is integrable if and only if $%
mN_{J}\left( lX,lY\right) =0$ and $M$ is integrable if and only if $%
lN_{J}\left( mX,mY\right) =0.$ If $J$ is integrable, then the distributions $%
L$ and $M$ are integrable.
\end{proposition}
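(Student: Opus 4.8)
The plan is to derive everything from the Frobenius involutivity criterion combined with the two identities recorded in the Remark immediately preceding this proposition. Recall that $L$ is precisely the image of the projector $l$, so the sections of $L$ are exactly the vector fields of the form $lX$ with $X\in\chi(M_n)$; likewise $M=\mathrm{im}\,m$. Since $l+m=I$ and $lm=ml=0$ from (\ref{7.4}), a vector field $Z$ lies in $L$ if and only if $mZ=0$, and dually $Z\in M$ if and only if $lZ=0$. By Frobenius, $L$ is integrable exactly when it is involutive, that is, when $[lX,lY]$ is again a section of $L$ for all $X,Y$.

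First I would treat $L$. Involutivity of $L$ means $m[lX,lY]=0$ for all $X,Y\in\chi(M_n)$, by the characterization $Z\in L\iff mZ=0$ just noted. Now I invoke the identity from the Remark,
\begin{equation*}
m[lX,lY]=\frac{1}{(2\rho_{a,b}-a)^2}\,mN_J(lX,lY).
\end{equation*}
Because $2\rho_{a,b}-a=\sqrt{a^2+4b}\neq 0$, the scalar $\frac{1}{(2\rho_{a,b}-a)^2}$ is nonzero, so $m[lX,lY]=0$ if and only if $mN_J(lX,lY)=0$. Combining this with the Frobenius step yields the first equivalence: $L$ is integrable if and only if $mN_J(lX,lY)=0$ for all $X,Y$. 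The treatment of $M$ is entirely symmetric: using $Z\in M\iff lZ=0$ together with the companion identity
\begin{equation*}
l[mX,mY]=\frac{1}{(2\rho_{a,b}-a)^2}\,lN_J(mX,mY)
\end{equation*}
and the same nonvanishing of the coefficient, I obtain that $M$ is involutive, hence integrable, if and only if $lN_J(mX,mY)=0$ for all $X,Y$.

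Finally, for the concluding assertion, suppose $J$ is integrable, i.e. $N_J\equiv 0$ in the sense of part (i) of the Theorem. Then in particular $N_J(lX,lY)=0$ and $N_J(mX,mY)=0$ for all $X,Y$, so both $mN_J(lX,lY)=0$ and $lN_J(mX,mY)=0$ hold trivially. By the two equivalences just established, $L$ and $M$ are both integrable.

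The only point requiring genuine care is the Frobenius step, namely verifying that involutivity of each distribution is faithfully captured by the vanishing of the appropriate projected bracket; but this is immediate once one uses $l+m=I$ together with the orthogonality relations $lm=ml=0$ to identify $L=\ker m$ and $M=\ker l$. After that the argument is pure bookkeeping with the identities already supplied in the Remark, the decisive quantitative input being simply that the common factor $(2\rho_{a,b}-a)^2$ does not vanish.
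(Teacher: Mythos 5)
Your proof is correct and follows exactly the route the paper intends: the proposition is stated immediately after the Remark precisely so that it follows from the identities $m[lX,lY]=\frac{1}{(2\rho_{a,b}-a)^2}mN_J(lX,lY)$ and $l[mX,mY]=\frac{1}{(2\rho_{a,b}-a)^2}lN_J(mX,mY)$ together with the Frobenius/involutivity criterion of the preceding Theorem. Your added observations --- that $2\rho_{a,b}-a=\sqrt{a^2+4b}\neq 0$ and that $L=\ker m$, $M=\ker l$ via (\ref{7.4}) --- are exactly the details the paper leaves implicit.
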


Now, we give the parallelism conditions of the metallic structures.

Let $\nabla $ be a linear connection on $M_{n}.$ We associate the pair $%
\left( J,\nabla \right) $ with other linear connections, which are Schouten
and Vr\u{a}nceanu connections \cite{bejancu, ianus}.

i) The Schouten connection,%
\begin{equation}
\tilde{\nabla}_{X}Y=l\left( \nabla _{X}lY\right) +m\left( \nabla
_{X}mY\right) .  \label{6.8}
\end{equation}%
ii) The Vr\u{a}nceanu connection,%
\begin{equation}
\hat{\nabla}_{X}Y=l\left( \nabla _{lX}lY\right) +m\left( \nabla
_{mX}mY\right) +l\left[ mX,lY\right] +m\left[ lX,mY\right] .  \label{6.9}
\end{equation}

\begin{proposition}
The projectors $l,$ $m$ are parallels in terms of Schouten and Vr\u{a}nceanu
connections for every linear connection $\nabla $ on $M_{n}$. Moreover, $J$
is parallel in terms of Schouten and Vr\u{a}nceanu connections.
\end{proposition}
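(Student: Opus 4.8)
The plan is to recall that a $(1,1)$-tensor field $S$ is parallel with respect to a linear connection $D$ precisely when $\left( D_{X}S\right) Y=D_{X}\left( SY\right) -S\left( D_{X}Y\right) =0$ for all $X,Y\in \chi \left( M_{n}\right) $, and then to verify this identity for $S=l$, $S=m$, and $S=J$ by direct substitution, using only the algebraic relations (\ref{7.4}). The governing observation is that both the Schouten formula (\ref{6.8}) and the Vr\u{a}nceanu formula (\ref{6.9}) are assembled out of $l$ and $m$ applied to various arguments, so every verification collapses under the idempotency relations $l^{2}=l$, $m^{2}=m$ and the orthogonality relations $lm=ml=0$.

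First I would treat the Schouten connection (\ref{6.8}). Replacing $Y$ by $lY$ in the defining formula and using $l^{2}=l$ together with $ml=0$ gives $\tilde{\nabla}_{X}\left( lY\right) =l\left( \nabla _{X}lY\right) $; applying $l$ to the full expression for $\tilde{\nabla}_{X}Y$ and using $l^{2}=l$, $lm=0$ produces $l\left( \tilde{\nabla}_{X}Y\right) =l\left( \nabla _{X}lY\right) $ as well. The two agree, so $\left( \tilde{\nabla}_{X}l\right) Y=0$. The computation for $m$ is entirely symmetric, exchanging the roles of $l$ and $m$ and invoking $m^{2}=m$, $lm=ml=0$.

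Next I would handle the Vr\u{a}nceanu connection (\ref{6.9}), which is the more delicate case because of the two Lie-bracket terms. The key point is that $m\left( lY\right) $ and $l\left( mY\right) $ are identically the zero vector field, since $ml=lm=0$ as operators, so any bracket in which one slot is $m\left( lY\right) $ or $l\left( mY\right) $ vanishes. With this in mind, substituting $lY$ for $Y$ kills the $m\left( \nabla _{mX}mY\right) $ and $m\left[ lX,mY\right] $ pieces and leaves $\hat{\nabla}_{X}\left( lY\right) =l\left( \nabla _{lX}lY\right) +l\left[ mX,lY\right] $; applying $l$ to the full Vr\u{a}nceanu expression and using the same idempotency and orthogonality relations reproduces exactly this, so $\left( \hat{\nabla}_{X}l\right) Y=0$. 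The argument for $m$ is again symmetric, and this bookkeeping over the bracket terms is the main obstacle: one must recognize that the vanishing of $l\left( mY\right) $ and $m\left( lY\right) $ as vector fields makes the two bracket contributions behave as cleanly as the purely algebraic ones.

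Finally, the parallelism of $J$ follows without further bracket computations. From (\ref{6.6}) one has $Jl=\rho _{a,b}\,l$ and $Jm=\left( a-\rho _{a,b}\right) m$, whence $J=J\left( l+m\right) =\rho _{a,b}\,l+\left( a-\rho _{a,b}\right) m$ by (\ref{7.4}). Since $l$ and $m$ are already known to be parallel with respect to each of the two connections and the coefficients are constants, $J$ is a constant-coefficient combination of parallel tensors and is therefore itself parallel; concretely $\tilde{\nabla}_{X}\left( JY\right) =\rho _{a,b}\,l\left( \tilde{\nabla}_{X}Y\right) +\left( a-\rho _{a,b}\right) m\left( \tilde{\nabla}_{X}Y\right) =J\left( \tilde{\nabla}_{X}Y\right) $, and identically for $\hat{\nabla}$. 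Everything beyond the Vr\u{a}nceanu step is routine substitution.
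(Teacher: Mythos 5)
Your proposal is correct and follows essentially the same route as the paper: both verify $(\tilde{\nabla}_{X}l)Y=0$ and $(\hat{\nabla}_{X}l)Y=0$ by substituting $lY$ and applying $l$, using only $l^{2}=l$, $m^{2}=m$, $lm=ml=0$, and both deduce the parallelism of $J$ from the relation (\ref{6.6}), i.e. from $J=\rho_{a,b}\,l+(a-\rho_{a,b})\,m$. Your write-up merely makes explicit the constant-coefficient argument that the paper compresses into ``From (\ref{6.6}), $J$ is parallel.''
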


\begin{proof}
For every vector fields $X,Y$ on $\chi \left( M_{n}\right) $ and with the
help of (\ref{7.4}),%
\begin{equation*}
\left( \tilde{\nabla}_{X}l\right) Y=\tilde{\nabla}_{X}lY-l\left( \tilde{%
\nabla}_{X}Y\right) =l\left( \nabla _{X}lY\right) -l\left( \nabla
_{X}lY\right) =0,
\end{equation*}%
\begin{equation*}
\begin{array}{ccl}
\left( \hat{\nabla}_{X}l\right) Y & = & \hat{\nabla}_{X}lY-l\left( \hat{%
\nabla}_{X}Y\right) \\ 
& = & l\left( \nabla _{lX}lY\right) +l\left[ mX,lY\right] -l\left( \nabla
_{lX}lY\right) -l\left[ mX,lY\right] \\ 
& = & 0.%
\end{array}%
\end{equation*}%
Therefore, $l$ is parallel relevant to $\tilde{\nabla}$ and $\hat{\nabla}.$

Similarly, the projector $m$ is parallel with respect to the Schouten and Vr%
\u{a}nceanu connections. From (\ref{6.6}), $J$ is parallel in terms of
Schouten and Vr\u{a}nceanu connections.
\end{proof}

\begin{definition}[\protect\cite{das}]
If $\nabla _{X}Y\in L$ where $X\in \chi \left( M_{n}\right) $ and $Y\in L,$
the distribution $L$ is called parallel with respect to linear connection $%
\nabla .$
\end{definition}

\begin{definition}[\protect\cite{das}]
If $(\Delta J)(X,Y)\in L$ where%
\begin{equation}
(\Delta J)(X,Y)=J\nabla _{X}Y-J\nabla _{Y}X-\nabla _{JX}Y+\nabla _{Y}(JX)
\label{7.6}
\end{equation}%
for $X\in L,$ $Y\in \chi \left( M_{n}\right) ,$ the distribution $L$ is
called $\nabla -half$ $parallel$.
\end{definition}

\begin{definition}[\protect\cite{das}]
If $(\Delta J)(X,Y)\in M$ where $X\in L,$ $Y\in \chi \left( M_{n}\right) ,$
the distribution $L$ is called $\nabla -anti$ $half$ $parallel$.
\end{definition}

\begin{proposition}
The distributions $L$ and $M$ are parallel in terms of Schouten and Vr\u{a}%
nceanu connections for the linear connection $\nabla $ on $M_{n}.$
\end{proposition}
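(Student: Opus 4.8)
The plan is to verify the definition of a parallel distribution directly, exploiting the characterization of $L$ and $M$ through their projectors. Since $L$ is the image of $l$ and $M$ is the image of $m$, and since $l+m=I$, $lm=ml=0$ by (\ref{7.4}), a vector field $Y$ lies in $L$ if and only if $lY=Y$ and $mY=0$; symmetrically, $Y\in M$ if and only if $mY=Y$ and $lY=0$. This reformulation reduces the entire statement to substituting these fixed-point/annihilation relations into the defining formulas (\ref{6.8}) and (\ref{6.9}) and then reading off which projector sits in front of the surviving terms.

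First I would treat the Schouten connection. For $Y\in L$ and arbitrary $X\in\chi\left(M_{n}\right)$, substituting $lY=Y$ and $mY=0$ into (\ref{6.8}) collapses the second summand and leaves $\tilde{\nabla}_{X}Y=l\left(\nabla_{X}Y\right)$. As the right-hand side is visibly in the image of $l$, it belongs to $L$, which is exactly the condition required by the definition of a parallel distribution. The case $Y\in M$ is identical after exchanging the roles of $l$ and $m$: substituting $mY=Y$ and $lY=0$ yields $\tilde{\nabla}_{X}Y=m\left(\nabla_{X}Y\right)\in M$.

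Next I would handle the Vr\u{a}nceanu connection in the same spirit. For $Y\in L$, inserting $lY=Y$ and $mY=0$ into (\ref{6.9}) annihilates the summands $m\left(\nabla_{mX}mY\right)$ and $m\left[lX,mY\right]$, leaving $\hat{\nabla}_{X}Y=l\left(\nabla_{lX}Y\right)+l\left[mX,Y\right]$; both surviving terms carry the projector $l$ in front, so their sum lies in $L$. The treatment of $M$ again follows by the $l\leftrightarrow m$ symmetry of (\ref{6.9}), giving $\hat{\nabla}_{X}Y=m\left(\nabla_{mX}Y\right)+m\left[lX,Y\right]\in M$.

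There is no genuine obstacle here: the argument is a direct substitution once the eigenspaces $L$ and $M$ are encoded by the relations $lY=Y,\,mY=0$ and $mY=Y,\,lY=0$ respectively. The only point demanding a little care is bookkeeping, namely checking that each non-vanishing summand indeed begins with the correct projector so that membership in the target distribution is manifest. This computation parallels, and is in fact slightly simpler than, the verification of the parallelism of $l$, $m$, and $J$ carried out in the preceding proposition.
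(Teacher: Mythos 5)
Your argument is correct and is essentially the same as the paper's: both substitute $lY=Y$, $mY=0$ (for $Y\in L$) into (\ref{6.8}) and (\ref{6.9}) to obtain $\tilde{\nabla}_{X}Y=l\left(\nabla_{X}Y\right)$ and $\hat{\nabla}_{X}Y=l\left(\nabla_{lX}Y\right)+l\left[mX,Y\right]$, both manifestly in $L$, and then appeal to the $l\leftrightarrow m$ symmetry for $M$. No gaps; this matches the paper's proof step for step.
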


\begin{proof}
Let $X\in \chi \left( M_{n}\right) $ and $Y\in L.$ $mY=0$ and $lY=Y,$ using (%
\ref{6.8}) and (\ref{6.9}) we obtain 
\begin{equation*}
\tilde{\nabla}_{X}Y=l\left( \nabla _{X}Y\right) \in L,
\end{equation*}%
\begin{equation*}
\hat{\nabla}_{X}Y=l\left( \nabla _{lX}Y\right) +l\left[ mX,Y\right] \in L.
\end{equation*}%
Thus, $L$ is parallel with respect to $\tilde{\nabla}$ and $\hat{\nabla}.$

In the same manner, we can see that $M$ satisfies the similar relations.
\end{proof}

\begin{proposition}
The distributions $L$ and $M$ are parallel with respect to $\nabla $ linear
connection if and only if $\nabla $ and $\tilde{\nabla}$ are equal.
\end{proposition}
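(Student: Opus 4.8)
The plan is to reduce the claim to an explicit formula for the difference $\nabla _{X}Y-\tilde{\nabla}_{X}Y$ and then read off both implications from it. First I would expand $\nabla _{X}Y$ by inserting the decomposition $I=l+m$ from (\ref{7.4}) into the argument: writing $Y=lY+mY$ and applying $\nabla _{X}$ gives $\nabla _{X}Y=\nabla _{X}(lY)+\nabla _{X}(mY)$. Next I would decompose each of these two terms again along $I=l+m$, so that $\nabla _{X}(lY)=l(\nabla _{X}lY)+m(\nabla _{X}lY)$ and likewise for $\nabla _{X}(mY)$. Subtracting the definition (\ref{6.8}) of the Schouten connection then yields
\begin{equation*}
\nabla _{X}Y-\tilde{\nabla}_{X}Y=m\left( \nabla _{X}lY\right) +l\left( \nabla _{X}mY\right) ,
\end{equation*}
an identity that uses only the projector relations (\ref{7.4}).

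The second step is to observe that the two surviving terms lie in complementary distributions: $m\left( \nabla _{X}lY\right) \in M$ and $l\left( \nabla _{X}mY\right) \in L$, since $\mathrm{im}\,m=M$ and $\mathrm{im}\,l=L$. Because $L$ and $M$ are complementary, $L\cap M=0$, so the sum above vanishes for all $X,Y$ if and only if each summand vanishes separately for all $X,Y$.

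The third step is to match each vanishing condition to a parallelism statement. By definition $L$ is parallel with respect to $\nabla $ exactly when $\nabla _{X}Y\in L$ for every $X\in \chi \left( M_{n}\right) $ and $Y\in L$; since every section of $L$ has the form $lZ$ and membership in $L$ is detected by $m(\cdot )=0$, this is equivalent to $m\left( \nabla _{X}lZ\right) =0$ for all $X,Z$. Symmetrically, $M$ is parallel if and only if $l\left( \nabla _{X}mZ\right) =0$. Combining this with the second step, $L$ and $M$ are both parallel if and only if $\nabla _{X}Y-\tilde{\nabla}_{X}Y=0$ for all $X,Y$, i.e.\ $\nabla =\tilde{\nabla}$, which is precisely the assertion (and both directions of the equivalence follow at once from the displayed identity).

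I do not anticipate a serious obstacle, as the computation is entirely formal once (\ref{7.4}) is in hand. The only point requiring care is the complementarity argument of the second step: one must note that the two residual terms lie in $M$ and $L$ respectively, so that their sum can vanish only when both do. Without this remark one could not establish the ``only if'' direction, namely that equality of the connections $\nabla $ and $\tilde{\nabla}$ forces \emph{both} distributions to be parallel rather than merely some combination of the two defects to cancel.
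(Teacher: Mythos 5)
Your proof is correct and follows essentially the same route as the paper: both hinge on inserting $I=l+m$ into $\nabla_{X}Y$ and comparing with the definition (\ref{6.8}) of the Schouten connection. Your version is in fact slightly more complete, since packaging the argument as the single identity $\nabla_{X}Y-\tilde{\nabla}_{X}Y=m\left(\nabla_{X}lY\right)+l\left(\nabla_{X}mY\right)$ together with the complementarity $L\cap M=0$ delivers the converse direction explicitly, which the paper dismisses with ``can be shown easily.''
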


\begin{proof}
If $L,$ $M$ are $\nabla -parallel$ then $\nabla _{X}(lY)\in L$ and $\nabla
_{X}(mY)\in M$ where $X,Y\in \chi (M_{n}).$ For that reason 
\begin{equation*}
\begin{array}{ccc}
\nabla _{X}(lY)=l\nabla _{X}(lY) & \text{and} & \nabla _{X}(mY)=m\nabla
_{X}(mY).%
\end{array}%
\end{equation*}%
Since, $l+m=I$ and from (\ref{6.8}),%
\begin{equation*}
\nabla _{X}Y=l\nabla _{X}(lY)+m\nabla _{X}(mY)=\tilde{\nabla}_{X}Y.
\end{equation*}%
Therefore $\nabla =\tilde{\nabla}.$

The converse can be shown easily.
\end{proof}

\begin{proposition}
If $\left[ lX,mY\right] \in L$ where $X\in L,$ $Y\in \chi \left(
M_{n}\right) $, the distribution $L$ is half parallel with respect to the Vr%
\u{a}nceanu connection.
\end{proposition}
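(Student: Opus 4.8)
The plan is to compute the tensor $(\Delta J)(X,Y)$ of the definition directly, but now with $\nabla$ replaced by the Vr\u{a}nceanu connection $\hat{\nabla}$ of (\ref{6.9}), for a fixed $X\in L$ and an arbitrary $Y\in\chi(M_n)$. Since $X\in L$ lies in the $\rho_{a,b}$-eigendistribution we have $JX=\rho_{a,b}X$, and because $\rho_{a,b}$ is a constant it may be pulled through the connection in either slot. Substituting $\hat{\nabla}_{JX}Y=\rho_{a,b}\hat{\nabla}_X Y$ and $\hat{\nabla}_Y(JX)=\rho_{a,b}\hat{\nabla}_Y X$ into
\[
(\Delta J)(X,Y)=J\hat{\nabla}_X Y-J\hat{\nabla}_Y X-\hat{\nabla}_{JX}Y+\hat{\nabla}_Y(JX)
\]
collapses the four terms into
\[
(\Delta J)(X,Y)=\left(J-\rho_{a,b}I\right)\left(\hat{\nabla}_X Y-\hat{\nabla}_Y X\right).
\]
This is the conceptual core: everything reduces to applying the operator $J-\rho_{a,b}I$ to the antisymmetric part of the Vr\u{a}nceanu derivative.

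Next I would use the spectral form of $J$. From (\ref{6.6}) together with $l+m=I$ one has $J=\rho_{a,b}l+(a-\rho_{a,b})m$, whence
\[
J-\rho_{a,b}I=(a-2\rho_{a,b})m.
\]
Thus $(\Delta J)(X,Y)=(a-2\rho_{a,b})\,m\!\left(\hat{\nabla}_X Y-\hat{\nabla}_Y X\right)$, so it suffices to evaluate the single component $m(\hat{\nabla}_X Y-\hat{\nabla}_Y X)$. Feeding $X\in L$ (so $lX=X$, $mX=0$) into (\ref{6.9}) gives $\hat{\nabla}_X Y=l(\nabla_X lY)+m[lX,mY]$ and $\hat{\nabla}_Y X=l(\nabla_{lY}X)+l[mY,X]$; applying $m$ and using the projector relations $ml=0$, $m^2=m$ of (\ref{7.4}) annihilates every term except one, leaving $m\hat{\nabla}_X Y=m[lX,mY]$ and $m\hat{\nabla}_Y X=0$. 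Hence
\[
(\Delta J)(X,Y)=(a-2\rho_{a,b})\,m[lX,mY].
\]

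Finally, the hypothesis $[lX,mY]\in L$ forces $m[lX,mY]=0$, since $m$ annihilates $L$, so that $(\Delta J)(X,Y)=0\in L$ and $L$ is $\hat{\nabla}$-half parallel. The main thing to watch is the bookkeeping of the projector identities (\ref{7.4}) when expanding (\ref{6.9}) in the two orders $\hat{\nabla}_X Y$ and $\hat{\nabla}_Y X$: the computation is short, but each misplaced $l$ or $m$ changes which terms survive. It is also worth noting that, because the surviving expression already lives in $M$, membership in $L$ is equivalent to vanishing; thus ``half parallel'' here is really the identity $(\Delta J)(X,Y)=0$, and the hypothesis $[lX,mY]\in L$ is precisely what is needed to secure it.
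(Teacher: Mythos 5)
Your proof is correct and follows essentially the same route as the paper: both reduce $(\Delta J)(X,Y)$ via (\ref{6.6}), (\ref{6.9}) and (\ref{7.4}) to the single surviving term $\left(a-2\rho_{a,b}\right)m\left[lX,mY\right]$, which the hypothesis $\left[lX,mY\right]\in L=\ker m$ kills. Your preliminary factorization $(\Delta J)(X,Y)=\left(J-\rho_{a,b}I\right)\left(\hat{\nabla}_{X}Y-\hat{\nabla}_{Y}X\right)=\left(a-2\rho_{a,b}\right)m\left(\hat{\nabla}_{X}Y-\hat{\nabla}_{Y}X\right)$ is a tidy way of organizing the same computation (and shows the expression lies in $M$ outright), but it is not a different argument.
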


\begin{proof}
In the view of the equation (\ref{7.6}) for $\hat{\nabla},$ we get%
\begin{equation*}
m(\Delta J)(X,Y)=mJ\hat{\nabla}_{X}Y-mJ\hat{\nabla}_{Y}X-m\hat{\nabla}%
_{JX}Y+m\hat{\nabla}_{Y}(JX)
\end{equation*}%
where $X\in L,$ $Y\in \chi \left( M_{n}\right) .$

As a result, by (\ref{6.6}) and (\ref{6.9}), we have 
\begin{equation*}
m(\Delta J)(X,Y)=\left( a-2\rho _{a,b}\right) m\left[ lX,mY\right]
\end{equation*}%
which proves the proposition.
\end{proof}

Similarly, we have the following proposition for distribution $M:$

\begin{proposition}
If $\left[ mX,lY\right] \in M$ where $X\in M,$ $Y\in \chi \left(
M_{n}\right) $, the distribution $M$ is half parallel with respect to the Vr%
\u{a}nceanu connection.
\end{proposition}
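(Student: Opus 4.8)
The plan is to mirror the proof of the preceding proposition, interchanging the roles of the projectors $l$ and $m$ together with the two eigenvalues $\rho_{a,b}$ and $a-\rho_{a,b}$. By definition, $M$ being half parallel with respect to the Vr\u{a}nceanu connection means precisely that $(\Delta J)(X,Y)\in M$ for every $X\in M$ and $Y\in \chi(M_{n})$. Since $l+m=I$, it therefore suffices to show that the $L-$component vanishes, i.e. that $l(\Delta J)(X,Y)=0$.

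First I would apply the projector $l$ to the defining formula (\ref{7.6}) of $\Delta J$, computed for the Vr\u{a}nceanu connection $\hat{\nabla}$, obtaining
\begin{equation*}
l(\Delta J)(X,Y)=lJ\hat{\nabla}_{X}Y-lJ\hat{\nabla}_{Y}X-l\hat{\nabla}_{JX}Y+l\hat{\nabla}_{Y}(JX).
\end{equation*}
Then I would invoke the relations (\ref{6.6}), which give $lJ=\rho_{a,b}l$, to pull $J$ through the leftmost projector in the first two terms. Because $X\in M$ we have $lX=0$ and $mX=X$, and since $M$ is the $(a-\rho_{a,b})-$eigenspace of $J$ we have $JX=(a-\rho_{a,b})X\in M$, whence $l(JX)=0$ as well. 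Substituting the expression (\ref{6.9}) for $\hat{\nabla}$ into each term and applying $l$ then annihilates every summand of the form $m(\cdots)$ (because $lm=0$) and every summand whose inner slot contains $lX$ or $l(JX)$ (because both vanish).

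Carrying out this bookkeeping, I expect the surviving contributions to be $\rho_{a,b}\,l[mX,lY]$ from the first term and $-(a-\rho_{a,b})\,l[mX,lY]$ from the third term, while the second and fourth terms drop out entirely. Collecting these gives
\begin{equation*}
l(\Delta J)(X,Y)=(2\rho_{a,b}-a)\,l[mX,lY],
\end{equation*}
which is the exact mirror (with $\rho_{a,b}$ and $a-\rho_{a,b}$ exchanged) of the identity established for $L$ in the previous proposition. The hypothesis $[mX,lY]\in M$ forces $l[mX,lY]=0$, so $l(\Delta J)(X,Y)=0$ and hence $(\Delta J)(X,Y)\in M$, proving the claim.

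The one step that requires care — and thus the main, though modest, obstacle — is the consistent elimination of the cross-terms after substituting (\ref{6.9}): one must check that among the four pieces $l(\nabla_{lA}lB)$, $m(\nabla_{mA}mB)$, $l[mA,lB]$, $m[lA,mB]$ produced by $\hat{\nabla}_{A}B$, only $l[mX,lY]$ survives the outer projection $l$ under the conditions $lX=0$ and $l(JX)=0$. Everything else is a routine application of the eigenvalue identities (\ref{6.6}) together with the idempotency and orthogonality relations (\ref{7.4}).
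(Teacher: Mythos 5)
Your proposal is correct and follows essentially the same route as the paper, which proves the companion statement for $L$ by computing $m(\Delta J)(X,Y)=(a-2\rho_{a,b})\,m[lX,mY]$ and then disposes of the $M$ case "similarly"; your mirrored computation $l(\Delta J)(X,Y)=(2\rho_{a,b}-a)\,l[mX,lY]$ is exactly that intended argument, carried out correctly (the surviving terms and the vanishing of the second and fourth terms check out).
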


\begin{proposition}
The distributions $L$ and $M$ are anti half parallel with respect to Vr\u{a}%
nceanu connection.
\end{proposition}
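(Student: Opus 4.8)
The plan is to follow the proof of the half-parallel proposition but to control the $l$-component of $(\Delta J)(X,Y)$ rather than its $m$-component. By the $L$-version of the anti-half-parallel definition, the assertion for $L$ amounts to showing that $(\Delta J)(X,Y)\in M$ whenever $X\in L$ and $Y\in\chi(M_{n})$, i.e. that $l(\Delta J)(X,Y)=0$, where $\Delta J$ is now formed from the Vr\u{a}nceanu connection $\hat{\nabla}$. So the whole task reduces to computing this single projection and checking that it vanishes.

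First I would apply $l$ to the defining formula (\ref{7.6}) written for $\hat{\nabla}$, obtaining
\[ l(\Delta J)(X,Y)=lJ\hat{\nabla}_{X}Y-lJ\hat{\nabla}_{Y}X-l\hat{\nabla}_{JX}Y+l\hat{\nabla}_{Y}(JX). \]
Using the eigenvalue relation $lJ=\rho_{a,b}l$ from (\ref{6.6}) on the first two summands, and noting that $X\in L$ gives $lX=X$, $mX=0$, hence $JX=\rho_{a,b}X\in L$, every term reduces to $\rho_{a,b}$ times an expression $l\hat{\nabla}_{\bullet}\bullet$ whose arguments again lie in $L$.

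Next I would expand each $l\hat{\nabla}$ through the Vr\u{a}nceanu formula (\ref{6.9}), using $l^{2}=l$ and $lm=ml=0$ from (\ref{7.4}). For $X\in L$ the summands carrying $mX$ drop out, so $l\hat{\nabla}_{X}Y=l(\nabla_{X}lY)$ and $l\hat{\nabla}_{Y}X=l(\nabla_{lY}X)+l[mY,X]$, and replacing $X$ by $JX=\rho_{a,b}X$ merely multiplies these by $\rho_{a,b}$. Substituting back, the term $\rho_{a,b}\,l(\nabla_{X}lY)$ from the first summand is cancelled by $-l\hat{\nabla}_{JX}Y=-\rho_{a,b}\,l(\nabla_{X}lY)$, while $-\rho_{a,b}\bigl(l(\nabla_{lY}X)+l[mY,X]\bigr)$ from the second summand is cancelled by $l\hat{\nabla}_{Y}(JX)=\rho_{a,b}\bigl(l(\nabla_{lY}X)+l[mY,X]\bigr)$. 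Hence $l(\Delta J)(X,Y)=0$ with no further hypothesis, so $L$ is anti half parallel with respect to $\hat{\nabla}$. The statement for $M$ is the mirror-image computation, using $mX=X$, $lX=0$, $JX=(a-\rho_{a,b})X$ and $mJ=(a-\rho_{a,b})m$ to obtain $m(\Delta J)(X,Y)=0$.

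The calculation is routine bookkeeping, so the main thing to watch is the pairing of terms after the eigenvalue substitution; the point worth stressing is \emph{why} no extra hypothesis is needed here. In the half-parallel proposition the analogous step left the residue $(a-2\rho_{a,b})\,m[lX,mY]$, which vanishes only under the assumption $[lX,mY]\in L$; by contrast, projecting with $l$ together with $lJ=\rho_{a,b}l$ makes the two $\rho_{a,b}$-scaled pairs collapse against one another exactly, leaving no surviving bracket. Setting up this cancellation correctly is the only subtlety, and once it is in place the result is immediate.
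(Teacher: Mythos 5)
Your proposal is correct and follows essentially the same route as the paper: project $(\Delta J)(X,Y)$ with $l$, use $lJ=\rho_{a,b}l$ from (\ref{6.6}) together with the Vr\u{a}nceanu formula (\ref{6.9}), and observe that everything cancels. The only (cosmetic) difference is that you substitute $JX=\rho_{a,b}X$ at the outset and exhibit the pairwise cancellation directly, whereas the paper first arrives at the general residue $l(\Delta J)(X,Y)=\left(2\rho_{a,b}-a\right)l\left[mX,lY\right]$ and then kills it with $mX=0$; the two computations are the same, and your closing remark correctly identifies why no extra hypothesis is needed here, in contrast to the half-parallel case.
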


\begin{proof}
Taking account of the equation (\ref{7.6}) for $\hat{\nabla},$ we get%
\begin{equation*}
l(\Delta J)(X,Y)=lJ\hat{\nabla}_{X}Y-lJ\hat{\nabla}_{Y}X-l\hat{\nabla}%
_{JX}Y+l\hat{\nabla}_{Y}(JX)
\end{equation*}%
where $X\in L,$ $Y\in \chi \left( M_{n}\right) .$

Using (\ref{6.6}) and (\ref{6.9}), we get%
\begin{equation*}
l(\Delta J)(X,Y)=\left( 2\rho _{a,b}-a\right) l\left[ mX,lY\right] .
\end{equation*}%
Because $mX=0,$ we get $l(\Delta J)(X,Y)=0.$ Thus, $(\Delta J)(X,Y)\in M.$

In the same way, we can show that $M$ is anti-half parallel with respect to
the Vr\u{a}nceanu connection.
\end{proof}

\section{Metallic Riemannian Manifolds}

In this section, we study a metallic Riemannian manifold with respect to the
Riemannian metric and we give an example of the metallic structure on
manifold $%
\mathbb{R}
^{2}$.

\begin{definition}[\protect\cite{gray, pri, yano}]
Let $F$ be an almost product structure on $M_{n}$ and $g$ be a Riemannian
metric (or a semi-Riemannian metric) given by 
\begin{equation*}
\begin{array}{ccc}
g\left( F\left( X\right) ,F\left( Y\right) \right) =g\left( X,Y\right)  & ,
& \forall X,Y\in \chi \left( M_{n}\right) 
\end{array}%
\end{equation*}%
or equivalently, $F$ be a $g$-symmetric endomorphism, such as 
\begin{equation*}
g\left( F\left( X\right) ,F\left( Y\right) \right) =g\left( X,Y\right) .
\end{equation*}%
So, the pair $\left( g,F\right) $ is called a Riemannian almost product
structure (or a semi-Riemannian almost product structure).
\end{definition}

By the help of (\ref{3.2}) and (\ref{3.3}), the following proposition can be
given:

\begin{proposition}
The almost product structure $F$ is a $g-$symmetric endomorphism if and only
if the metallic structure $J$ is also a $g-$symmetric endomorphism.
\end{proposition}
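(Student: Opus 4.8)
The plan is to exploit the fact that $g$-symmetry is preserved under real linear combinations, together with the invertible affine relations (\ref{3.2}) and (\ref{3.3}) between $J$ and $F$. First I would record the elementary observation that the identity operator $I$ is automatically $g$-symmetric, since $g(IX,Y)=g(X,Y)=g(X,IY)$ for all $X,Y\in\chi(M_n)$, and that the collection of $g$-symmetric endomorphisms is closed under addition and multiplication by real scalars. These two facts reduce the claim to tracking how symmetry transports along the linear relations (\ref{3.2}) and (\ref{3.3}).

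For the forward direction, assume $F$ is $g$-symmetric. Using (\ref{3.2}), compute for arbitrary $X,Y\in\chi(M_n)$
\[
g(JX,Y)=\frac{a}{2}\,g(X,Y)+\frac{2\rho_{a,b}-a}{2}\,g(FX,Y).
\]
Applying the symmetry of $F$ to the second term and recombining via (\ref{3.2}) yields $g(JX,Y)=g(X,JY)$, so $J$ is $g$-symmetric. The converse is entirely analogous: assuming $J$ is $g$-symmetric, I would substitute (\ref{3.3}), which expresses $F$ as a real linear combination of $J$ and $I$ — both now symmetric — and conclude that $F$ is $g$-symmetric by the stated closure property.

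The only point that requires care is that the relations (\ref{3.2}) and (\ref{3.3}) are genuinely inverse to one another, so that neither implication degenerates. This hinges on the coefficient $2\rho_{a,b}-a=\sqrt{a^{2}+4b}$ being nonzero, which holds because $b$ is a positive integer, so that $a^{2}+4b>0$. Consequently the factor multiplying $F$ in (\ref{3.2}) (and $J$ in (\ref{3.3})) never vanishes, guaranteeing that the passage between $F$ and $J$ is a bijection on endomorphisms and that $g$-symmetry is carried faithfully in both directions. I do not anticipate any substantive obstacle here; the argument is a direct linear-algebra computation and should fit in a few lines once the closure observation for $g$-symmetric endomorphisms is made explicit.
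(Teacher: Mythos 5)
Your proposal is correct and follows exactly the route the paper intends: the paper gives no explicit proof but states that the proposition follows ``by the help of (\ref{3.2}) and (\ref{3.3})'', i.e.\ precisely the observation that $J$ and $F$ are real affine combinations of one another and of the $g$-symmetric identity $I$, with nonzero coefficient $2\rho_{a,b}-a=\sqrt{a^{2}+4b}$. Your write-up simply makes explicit the closure and invertibility points the paper leaves implicit.
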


\begin{definition}[\protect\cite{hretcanu2013}]
Let $g$ be a Riemannian metric $($or a semi-Riemannian metric) on $M_{n},$
such as%
\begin{equation*}
g\left( J\left( X\right) ,Y\right) =g\left( X,J\left( Y\right) \right) ,%
\text{ \ \ \ }\forall \text{ }X,Y\in \chi (M_{n}).
\end{equation*}%
Then, $\left( g,J\right) $ is called a metallic Riemannian structure (or a
metallic semi-Riemannian structure). Also, $\left( M_{n},g,J\right) $ is
named a metallic Riemannian manifold (or a metallic semi-Riemannian
manifold).
\end{definition}

\begin{corollary}
On a metallic Riemannian manifold,

a) The projectors $l$, $m$ are $g-$symmetric, i.e.%
\begin{equation*}
\begin{array}{c}
g\left( l(X),Y\right) =g\left( X,l\left( Y\right) \right) \\ 
g\left( m(X),Y\right) =g\left( X,m\left( Y\right) \right) .%
\end{array}%
\end{equation*}%
b) The distributions $L$, $M$ are $g-$orthogonal, i.e.%
\begin{equation*}
g\left( l(X),m(Y)\right) =0.
\end{equation*}%
c) The metallic structure is $N_{J}-$symmetric, i.e.%
\begin{equation*}
N_{J}\left( J\left( X\right) ,Y\right) =N_{J}\left( X,J\left( Y\right)
\right) .
\end{equation*}%
A Riemannian almost product structure is a locally product structure if $F$
is parallel with respect to the Levi-Civita connection $\overset{g}{\nabla }$%
of $g,$ i.e. $\overset{g}{\nabla }F=0$ and if $\nabla $ is a symmetric
linear connection, then the Nijenhuis tensor of $F$ satisfies%
\begin{equation*}
N_{F}\left( X,Y\right) =\left( \nabla _{FX}F\right) Y-\left( \nabla
_{FY}F\right) X-F\left( \nabla _{X}F\right) Y+F\left( \nabla _{Y}F\right) X.
\end{equation*}%
If $(M_{n},g,J)$ is a locally product metallic Riemannian manifold, then the
metallic structure $J$ is integrable.
\end{corollary}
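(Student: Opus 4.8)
The plan is to deduce the integrability of $J$ from the parallelism of the associated almost product structure $F$, using the relation between their Nijenhuis tensors recorded in (\ref{5.4}).

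First I would unpack the hypothesis. By the correspondence (\ref{3.3}) of Proposition \ref{teo 1}, the metallic structure $J$ determines the almost product structure $F=\frac{2}{2\rho _{a,b}-a}J-\frac{a}{2\rho _{a,b}-a}I$, and conversely (\ref{3.2}) gives $J=\frac{a}{2}I+\frac{2\rho _{a,b}-a}{2}F$. Since the Levi-Civita connection satisfies $\overset{g}{\nabla }I=0$, these affine relations yield $\overset{g}{\nabla }J=\frac{2\rho _{a,b}-a}{2}\overset{g}{\nabla }F$. Thus asserting that $(M_{n},g,J)$ is locally product, i.e. that the Riemannian almost product structure $(g,F)$ is locally product with $\overset{g}{\nabla }F=0$, is equivalent to requiring $\overset{g}{\nabla }J=0$.

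Next I would exploit that the Levi-Civita connection is symmetric, so the Nijenhuis tensor of $F$ admits the expression stated just above, namely $N_{F}(X,Y)=(\overset{g}{\nabla }_{FX}F)Y-(\overset{g}{\nabla }_{FY}F)X-F(\overset{g}{\nabla }_{X}F)Y+F(\overset{g}{\nabla }_{Y}F)X$. Every summand on the right carries a factor of the form $(\overset{g}{\nabla }_{Z}F)W$, which vanishes once $\overset{g}{\nabla }F=0$. Hence $N_{F}\equiv 0$. Finally I would transfer this to $J$: by (\ref{5.4}) we have $N_{F}=\frac{4}{(2\rho _{a,b}-a)^{2}}N_{J}$, and since $2\rho _{a,b}-a=\sqrt{a^{2}+4b}\neq 0$ the scalar factor is nonzero, so $N_{F}\equiv 0$ forces $N_{J}\equiv 0$, which is precisely the integrability of $J$.

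I do not anticipate a genuine computational obstacle: the entire argument is termwise and relies only on $\overset{g}{\nabla }F=0$ together with the nonvanishing of $2\rho _{a,b}-a$. The only point requiring care is the opening step, verifying that the \emph{locally product metallic} hypothesis really translates into $\overset{g}{\nabla }F=0$ (equivalently $\overset{g}{\nabla }J=0$); this is immediate from the affine relation between $F$ and $J$ and the identity $\overset{g}{\nabla }I=0$, but it is the link that makes the remainder of the proof go through.
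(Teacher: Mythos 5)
Your argument for the final assertion is correct and follows the route the corollary itself lays out: the statement supplies the two ingredients (the definition of locally product via $\overset{g}{\nabla}F=0$ and the expression of $N_{F}$ in terms of a symmetric connection), and chaining them with the relation $N_{F}=\frac{4}{(2\rho_{a,b}-a)^{2}}N_{J}$ from (\ref{5.4}) and the fact that $2\rho_{a,b}-a=\sqrt{a^{2}+4b}\neq 0$ gives $N_{J}\equiv 0$. Your opening remark that $\overset{g}{\nabla}J=\frac{2\rho_{a,b}-a}{2}\overset{g}{\nabla}F$, so that the locally product hypothesis is equivalent to $\overset{g}{\nabla}J=0$, is a correct and worthwhile observation, since the paper never says explicitly which almost product structure is meant; the affine relation (\ref{3.2})--(\ref{3.3}) settles it.

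The gap is one of coverage: the corollary has four claims, and you prove only the last. Parts a), b), c) are left untouched. They are not deep, but they need to be said: for a), the expressions (\ref{6.2}) write $l$ and $m$ as affine combinations of $J$ and $I$, so the $g$-symmetry of $J$ (the defining property of a metallic Riemannian structure) transfers directly to $l$ and $m$; for b), combine a) with $lm=0$ from (\ref{7.4}) to get $g\left(l(X),m(Y)\right)=g\left(X,(lm)(Y)\right)=0$; for c), expand both $N_{J}\left(J(X),Y\right)$ and $N_{J}\left(X,J(Y)\right)$ using $J^{2}=aJ+bI$ from (\ref{3.1}) and check that both reduce to
\begin{equation*}
b\left[JX,Y\right]+b\left[X,JY\right]+a\left[JX,JY\right]-bJ\left[X,Y\right]-J\left[JX,JY\right],
\end{equation*}
a computation that in fact uses only the polynomial identity and not the metric. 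Without these three items the proposal does not establish the corollary as stated.
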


\begin{theorem}
The set of linear connections $\nabla $ for which $\nabla J=0$ is 
\begin{equation*}
\begin{array}{ccl}
\nabla _{X}Y & = & \dfrac{1}{a^{2}+4b}\left[ \left( a^{2}+2b\right) \bar{%
\nabla}_{X}Y+2J\left( \bar{\nabla}_{X}JY\right) -aJ\left( \bar{\nabla}%
_{X}Y\right) -a\left( \bar{\nabla}_{X}JY\right) \right] \\ 
&  & +O_{F}Q\left( X,Y\right)%
\end{array}%
\end{equation*}%
where $\bar{\nabla}$ is an arbitrary fixed linear connection and $Q$ is a $%
\left( 1,2\right) -$tensor field for which $O_{F}Q$ is an associated Obata
operator 
\begin{equation*}
O_{F}Q\left( X,Y\right) =\frac{1}{2}\left[ Q\left( X,Y\right) +FQ\left(
X,FY\right) \right]
\end{equation*}%
for the corresponding almost product structure (\ref{3.2}).
\end{theorem}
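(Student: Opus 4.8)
The plan is to reduce the metallic parallelism condition to the corresponding condition for the almost product structure $F$ of (\ref{3.2}), to solve the resulting affine equation for all admissible connections, and then to transport the answer back to $J$ via the substitution (\ref{3.3}).

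First I would record the key reduction. Since $J=\frac{a}{2}I+\frac{2\rho _{a,b}-a}{2}F$ has \emph{constant} coefficients relative to $F$ and to the (always parallel) identity $I$, every linear connection satisfies $\nabla J=\frac{2\rho _{a,b}-a}{2}\nabla F$. Because $2\rho _{a,b}-a=\sqrt{a^{2}+4b}\neq 0$, this gives $\nabla J=0\Longleftrightarrow \nabla F=0$, replacing the metallic condition by the parallelism of $F$, which is more tractable since $F^{2}=I$.

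Next, fixing the arbitrary connection $\bar{\nabla}$, I would write a general connection as $\nabla _{X}Y=\bar{\nabla}_{X}Y+S(X,Y)$, where $S$ is a $(1,2)$-tensor field (the difference of two connections is a tensor). Using $(\bar{\nabla}_{X}F)Y=\bar{\nabla}_{X}(FY)-F\bar{\nabla}_{X}Y$, the requirement $\nabla F=0$ becomes the inhomogeneous linear equation
\[ S(X,FY)-F\,S(X,Y)=-(\bar{\nabla}_{X}F)Y, \]
whose general solution is any particular solution plus the general solution of the homogeneous equation $S(X,FY)=F\,S(X,Y)$. I would then identify the homogeneous solutions with the image of the Obata operator $O_{F}$: a short computation using $F^{2}=I$ shows $O_{F}Q(X,FY)=F\,O_{F}Q(X,Y)$, so every $O_{F}Q$ is a homogeneous solution, while conversely $S(X,FY)=F\,S(X,Y)$ forces $O_{F}S=S$. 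Thus $O_{F}$ is a projection onto the homogeneous solution space, and letting $Q$ range over all $(1,2)$-tensors sweeps out exactly that space.

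For a particular solution I would take the symmetrized connection $\nabla ^{(0)}_{X}Y=\frac{1}{2}\bigl[\bar{\nabla}_{X}Y+F\bar{\nabla}_{X}(FY)\bigr]$; the Leibniz rule together with $F^{2}=I$ confirms that this is a genuine connection, and the same identity gives $\nabla ^{(0)}F=0$. Substituting $FY=(2JY-aY)/\sqrt{a^{2}+4b}$ into $\nabla ^{(0)}$ and collecting terms over the common denominator $a^{2}+4b=(2\rho _{a,b}-a)^{2}$ reproduces precisely the bracketed expression in the statement, the $\bar{\nabla}_{X}Y$ terms recombining into the coefficient $(a^{2}+2b)/(a^{2}+4b)$. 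Adding $O_{F}Q$ then yields the full asserted family. The main labour will be this final algebraic transport---expanding $F\bar{\nabla}_{X}(FY)$ in terms of $J$ and reassembling the coefficients---together with the two-sided verification that $O_{F}$ is exactly the projection onto the homogeneous solutions; everything else is formal.
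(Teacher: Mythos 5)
Your proposal is correct, and in fact the paper states this theorem without giving any proof at all, so there is nothing to compare it against except the form of the statement itself. Your route --- reducing $\nabla J=0$ to $\nabla F=0$ via the constant-coefficient relation $\nabla J=\frac{\sqrt{a^{2}+4b}}{2}\nabla F$, exhibiting $\nabla^{(0)}_{X}Y=\frac{1}{2}\bigl[\bar{\nabla}_{X}Y+F\bar{\nabla}_{X}(FY)\bigr]$ as a particular solution, identifying the homogeneous solutions with the image of the Obata projector, and translating back with $F=(2J-aI)/\sqrt{a^{2}+4b}$ (which does reproduce the coefficient $(a^{2}+2b)/(a^{2}+4b)$) --- is exactly the standard Obata-operator argument that the statement's form presupposes, and it fills the gap the paper leaves open.
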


We conclude with the following example for the metallic structure.

\begin{example}
\begin{eqnarray*}
l &=&\frac{x^{2}}{x^{2}+y^{2}}\frac{\partial }{\partial x}\otimes dx+\frac{xy%
}{x^{2}+y^{2}}\frac{\partial }{\partial x}\otimes dy+\frac{xy}{x^{2}+y^{2}}%
\frac{\partial }{\partial y}\otimes dx+\frac{y^{2}}{x^{2}+y^{2}}\frac{%
\partial }{\partial y}\otimes dy \\
m &=&\frac{y^{2}}{x^{2}+y^{2}}\frac{\partial }{\partial x}\otimes dx-\frac{xy%
}{x^{2}+y^{2}}\frac{\partial }{\partial x}\otimes dy-\frac{xy}{x^{2}+y^{2}}%
\frac{\partial }{\partial y}\otimes dx+\frac{x^{2}}{x^{2}+y^{2}}\frac{%
\partial }{\partial y}\otimes dy
\end{eqnarray*}%
are projection operators in $%
\mathbb{R}
^{2}$ which satisfy the conditions (\ref{7.4}).%
\begin{equation*}
\begin{array}{ccc}
L=Sp\left\{ x\frac{\partial }{\partial x}+y\frac{\partial }{\partial y}%
\right\} & \text{and} & M=Sp\left\{ y\frac{\partial }{\partial x}-x\frac{%
\partial }{\partial y}\right\}%
\end{array}%
\end{equation*}%
are complementary distributions corresponding to the projection operators $l$
and $m,$ respectively. The distributions $L$, $M$ are orthogonal with
respect to the Euclidean metric of $%
\mathbb{R}
^{2}.$ Furthermore, these distributions are connected to the metallic
structure 
\begin{eqnarray*}
J\left( \frac{\partial }{\partial x}\right) &=&\frac{\rho
_{a,b}x^{2}+(a-\rho _{a,b})y^{2}}{x^{2}+y^{2}}\frac{\partial }{\partial x}+%
\frac{\left( 2\rho _{a,b}-a\right) xy}{x^{2}+y^{2}}\frac{\partial }{\partial
y}, \\
J\left( \frac{\partial }{\partial y}\right) &=&\frac{\left( 2\rho
_{a,b}-a\right) xy}{x^{2}+y^{2}}\frac{\partial }{\partial x}+\frac{(a-\rho
_{a,b})x^{2}+\rho _{a,b}y^{2}}{x^{2}+y^{2}}\frac{\partial }{\partial y},
\end{eqnarray*}

which is integrable because $N_{J}\left( \frac{\partial }{\partial x},\frac{%
\partial }{\partial y}\right) =0.$
\end{example}


\begin{thebibliography}{99}
\bibitem{bejancu} Bejancu A., Farran H.R., Foliations and geometric
structures, Mathematics and Its Applications, vol.580, Springer, 2006.

\bibitem{golden} Crasmareanu M., Hretcanu C. E., Golden differential
geometry, Chaos, Solitons \& Fractals, 38(2008), 1229--1238.

\bibitem{cruceanu} Cruceanu V., On almost biproduct complex manifolds, An.
St. Univ. I. Cuza, Ia\c{s}i, Mat, 52(1)(2006), 5--24.

\bibitem{das} Das L.S., Nikic J., Nivas R., Parallelism of distributions and
geodesics on $F(a_{1},a_{2},...,a_{n})-$structure Lagrangian manifolds,
Diff. Geom. Dyn. Syst., 8(2006), 82--89.

\bibitem{naschie} El Naschie M. S., COBE satellite measurement,
hyperspheres, superstrings and the dimension of space time, Chaos, Solitons
\& Fractals, 9(8)(1998), 1445--1471.

\bibitem{falcon} Falcon S., Plaza A., On 3-dimensional k-Fibonacci spirals,
Chaos, Solitons \& Fractals, 38(4)(2008), 993--1003.

\bibitem{gezer} Gezer, A., Cengiz, N., Salimov, A., On integrability of
golden Riemannian structures, Turk J. Math., 37(4)(2013), 693--703.

\bibitem{gezer2} Gezer, A., Karaman, C., On metallic Riemannian structures,
Turk J. Math., 39(6)(2015), 954--962.

\bibitem{gezer3} Gezer, A., Karaman, C., Golden-Hessian Structures, Proc.
Natl. Acad. Sci., India, Sect. A Phys. Sci., 86(1)(2016), 41--46.

\bibitem{petridis} Goldberg S.I., Petridis N. C., Differentiable solutions
of algebraic equations on manifolds, Kodai Math Sem. Rep., 25(1973) 111-128.

\bibitem{gray} Gray A., Pseudo-Riemannian almost product and submersions, J.
Math. Mech.16(1967) 715-737.

\bibitem{hretcanu2013} Htercanu, C.E., Crasmareanu, M., Metallic structures
on Riemannian manifolds, Rev. Un. Mat. Argentina, 54(2)(2013), 15--27.

\bibitem{ianus} Ianus S., Some almost product structures on manifolds with
linear connection, Kodai Math. Sem. Rep., 23(1971) 305-310.

\bibitem{lounesto} Lounesto P., Clifford Algebras and Spinors, Cambridge
University Press, United Kingdom, 2001.

\bibitem{ozkan2014} Ozkan, M., Prolongations of golden structure to tangent
bundle, Differ. Geom. Dyn. Syst., 16(2014), 227-238.

\bibitem{ozkan2015} Ozkan M., Citlak A. A., Taylan E., Prolongations of
golden structure to tangent bundle of order 2, GU J Sci, 28(2)(2015),
253-258.

\bibitem{ozkan2016} Ozkan M., Yilmaz F., Prolongations of golden structures
to tangent bundles of order $r$, Commun. Fac. Sci. Univ. Ank. Ser. A1 Math.
Stat., 65(1) (2016), 35-47.

\bibitem{ozkanpeltek} Ozkan M., Peltek B., A new structure on manifolds:
Silver structure, Int. Electron. J. Geom., 9(2)(2016), 59--69.

\bibitem{ozkanvd} Ozkan, M., Taylan, E., Citlak, A.A., On lifts of silver
structure, Journal of Science and Arts, 2(39)(2017), 223--234.

\bibitem{processi} Procesi C., Lie groups: An approach through invariants
and representations, Universitext, Springer, 2007.

\bibitem{pri} Pripoae G. T., Classification of semi-Riemannian almost
product structure, BSG Proc. 11, Geometry Balkan Press, Bucharest, 243--251,
2004.

\bibitem{savas} Savas M., Ozkan, M., Iscan M., On $4-$dimensional
golden-Walker structures, Journal of Science and Arts, 2(35) (2016), 89-100.

\bibitem{spinadel} Spinadel V. W. de, The family of metallic means, Vis.
Math. 1, 3(1999). http://vismath1.tripod.com/spinadel/.

\bibitem{sahin} \c{S}ahin, B., Akyol, M. A., Golden maps between Golden
Riemannian manifolds and constancy of certain maps, Math. Commun. 19(2014),
1-10.

\bibitem{yardimci} Yard\i mc\i , E. H., Yayl\i , Y., Golden quaternionic
structures, Int. Electron. J. Pure Appl. Math. 7 (2014), 109-125.

\bibitem{yano} Yano K., Kon M., Structures on manifolds, Series in Pure
Mathematics, Vol 3, World Scientific, Singapore, 1984.

\bibitem{yilmaz} Yilmaz, F., Ozkan, M., Bronze structure, 14. Mathematics
Symposium, Ni\u{g}de, Turkey, 109--111, 2015.
\end{thebibliography}
\end{document}